\newtheorem{thm}{Theorem}[section]
\newtheorem{df}[thm]{Definition}
\newtheorem{prop}[thm]{Proposition}
\newtheorem{cor}[thm]{Corollary}
\newtheorem{lem}[thm]{Lemma}
\newtheorem{ex}[thm]{Example}
\newtheorem{rem}[thm]{Remark}
\newtheorem*{reminder}{Theorem~1.2}
\newcommand{\Pic}{\operatorname{Pic}}
\newcommand{\Pre}{\operatorname{Preper}}
\newcommand{\pp}{\mathbb{P}}
\newcommand{\pn}{\mathbb{P}^n}
\newcommand{\prm}{\mathbb{P}^m}
\newcommand{\pni}{\mathbb{P}^{n_i}}
\newcommand{\pnk}{\mathbb{P}^{n_1} \times \cdots \times \mathbb{P}^{n_k}}
\newcommand{\ppn}{\mathbb{P}^{n_1} \times \cdots \times \mathbb{P}^{n_l}}
\begin{document}

\title[morphisms and heights]{Height estimates for dominant endomorphisms \\on projective varieties}

\author{Chong Gyu Lee}

\keywords{height, dominant morphism, preperiodic points, Northcott's property}

\date{\today}

\subjclass{Primary: 37P30 Secondary: 11G50, 32H50,  37P05}

\address{Department of Mathematics, University of Illinois at Chicago, Chicago IL 60607, US}

\email{phiel@math.uic.edu}

\maketitle

\begin{abstract}

If $\phi$ is a polarizable endomorphism on a projective variety, then the Weil height
function gives a relation between the height of a point and the height
of its image under $\phi$.  In this paper, we generalize this result
to arbitrary dominant endomorphisms.  We define height expansion and
contraction coefficients for dominant morphisms, compare these to
Silverman's height expansion coefficient in \cite{S3}, and provide
several examples of dynamical systems on projective varieties.
\end{abstract}

\section{Introduction}

    A dynamical system consists of a set $S$ and a map $\phi :S \rightarrow S$ maps $S$ to $S$ itself. Thus, more structure $S$ has, more dynamical information we gain. When $S$ is a projective variety, it has the Weil height functions so that arithmetic dynamics gains lots of information from them. Moreover, if we have a special kind of morphism, then we have pleasant result: we say that $\phi$ is \emph{polarizable} if there is an ample divisor $D\in \Pic(S) \otimes_\mathbb{Z} \mathbb{R}$ such that $\phi^*D $ is linearly equivalent to $q\cdot D$ where $q$ is a positive real number. If $\phi$ is a polarizable defined over a number field $K$, then it satisfies the Northcott's property: we say \emph{$\phi$ satisfies the Northcott's property} if the following equality holds for some Weil height function $h_D$ corresponding an ample divisor $D$:
    \[
    h_D\left( \phi(P) \right) = q\cdot  h_D(P) + O(1) \quad \text{for all}~P\in S(\overline{K}).
    \]

    If $\phi$ is not polarizable, then it does not satisfy the Northcott's property. For example, an automorphism of infinite order on $K3$ surface is not polarizable so that we can't expect the above height inequality. However, we can still expect to find the relation between the height values of points $P$ and $\phi(P)$. We say that \emph{$\phi$ satisfies the weak Northcott's property} if there are a Weil height $h_D$ corresponding an ample divisor and two constants $C_1, C_2$ such that
    \[
    C_1 \cdot h_D\left( \phi(P) \right)  - O(1) \leq    h_D(P)  \leq C_2 \cdot h_D\left( \phi(P) \right)  +O(1).
    \]

    The main purpose of this paper is that every `dominant' endomorphism satisfies the weak Northcott's property. In section~$2$, Every dominant endomorphisms generates a map on the ample cone. This fact allows us to find the constants for the weak Northcott's property. (Well-definedness is guaranteed by Lemma~\ref{ample}.)
    \begin{df}\label{coefficients}
            Let $W$ be a projective variety and let $\phi : W \rightarrow W$ be a dominant morphism. We define \emph{the height expansion coefficient of $\phi$ for $D$}
                \[
                      \mu_1(\phi,D) := \sup\{ \alpha \in \mathbb{R} ~|~  \phi^*D - \alpha D~\text{is ample}\}
                \]
                and \emph{the height contraction coefficient of $\phi$ for $D$}
                \[
                      \mu_2(\phi,D) := \inf \{ \alpha \in \mathbb{R} ~|~ \alpha D - \phi^*D~\text{is ample}\}.
                \]
    \end{df}

    \begin{thm}\label{main}
        Let $W$ be a projective variety, let $\phi:W\rightarrow W$ be a dominant endomorphism defined over a number field $K$, let $D$ be an ample divisor on $W$ and $\mu_1= \mu_1(\phi,D) ,\mu_2= \mu_2(\phi,D)$ be the height expansion and contraction coefficients of $\phi$ for $D$. Then, for any $\epsilon >0$, there are constants $C_1, C_2$ satisfying
        \[
         \dfrac{1}{\mu_1 - \epsilon}h_D\bigl( \phi(P) \bigr) +C_1 \geq  h_D(P) \geq \dfrac{1}{\mu_2 + \epsilon}  h_D(P) - C_2
        \]
        for all $P\in W(\overline{K})$.
    \end{thm}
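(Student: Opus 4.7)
The plan is to push the definitions of $\mu_1$ and $\mu_2$ through the Weil height machine, using two standard ingredients: functoriality of heights under pullback, and the lower bound enjoyed by any height associated with an ample divisor. Modulo the extension to real coefficients, the entire argument reduces to a short linear rearrangement.

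First I would appeal to Lemma~\ref{ample} to ensure that $\mu_1(\phi,D) > 0$ and $\mu_2(\phi,D) < \infty$, so that for every sufficiently small $\epsilon > 0$ both $\mu_1 - \epsilon$ and $\mu_2 + \epsilon$ are positive real numbers. By the definitions of supremum and infimum in Definition~\ref{coefficients}, the two $\mathbb{R}$-divisors $E_1 := \phi^*D - (\mu_1 - \epsilon)D$ and $E_2 := (\mu_2 + \epsilon)D - \phi^*D$ are ample.

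Next I would extend the Weil height machine $\mathbb{R}$-linearly to $\Pic(W) \otimes_{\mathbb{Z}} \mathbb{R}$ and use its two standard properties: (i) functoriality, $h_{\phi^*E}(P) = h_E(\phi(P)) + O(1)$, and (ii) positivity, $h_E(P) \geq -O(1)$ on $W(\overline{K})$ whenever $E$ is ample. Applying (ii) to $E_1$ and then expanding via (i) and linearity produces an inequality of the form $h_D(\phi(P)) - (\mu_1 - \epsilon)\, h_D(P) \geq -C$; dividing by $\mu_1 - \epsilon > 0$ yields $h_D(P) \leq \tfrac{1}{\mu_1 - \epsilon}\, h_D(\phi(P)) + C_1$. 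The analogous computation with $E_2$ produces $(\mu_2+\epsilon)\, h_D(P) - h_D(\phi(P)) \geq -C'$, and dividing by $\mu_2 + \epsilon > 0$ gives $h_D(P) \geq \tfrac{1}{\mu_2 + \epsilon}\, h_D(\phi(P)) - C_2$. Together these constitute the desired two-sided bound.

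The main obstacle is not the algebra but the real-coefficient framework: $\mu_1$ and $\mu_2$ are defined using arbitrary real coefficients, so one must confirm that both functoriality and the ample-lower-bound survive the passage from $\Pic(W)$ to $\Pic(W) \otimes_{\mathbb{Z}} \mathbb{R}$. The positivity $\mu_1 > 0$, which is what lets the division step preserve the direction of the inequality, is the content of Lemma~\ref{ample}. Granted that lemma and the $\mathbb{R}$-linear Weil height machine, the theorem follows by the direct rearrangement sketched above.
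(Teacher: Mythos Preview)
Your proposal is correct and follows essentially the same route as the paper's own proof: set $E_1 = \phi^*D - (\mu_1-\epsilon)D$ and $E_2 = (\mu_2+\epsilon)D - \phi^*D$, observe that both are ample so their associated heights are bounded below, expand via linearity and functoriality of the Weil height machine, and rearrange. If anything, your write-up is slightly more careful than the paper's, since you explicitly justify $\mu_1 - \epsilon > 0$ (via Lemma~\ref{ample}, together with the fact that $\phi^*D$ is ample for dominant $\phi$) before dividing through.
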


    Interestingly, we have Silverman's height expansion coefficient defined on \cite{S3}: a dominant endomorphism is clearly an example of a equidimensional dominant rational map. In section~3, we will show that they are the Silverman's height expansion coefficient is the same with $\mu_1$;
    \begin{prop}\label{Silverman}
    Let $\phi: W\rightarrow W$ be a dominant morphism defined over a number field, let $D$ be an ample divisor and let $\mu_1(\phi, D)$ be the height expansion coefficient of $\phi$. Then,
    \[
    \mu_1 (\phi,D) = \liminf_{h_D(P) \rightarrow \infty } \dfrac{h_D\bigl(\phi(P) \bigr)}{h_D(P)}.
    \]
    \end{prop}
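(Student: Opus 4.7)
The plan is to prove the two inequalities separately. The direction $\liminf \geq \mu_1$ is immediate from Theorem~\ref{main}: fixing $\epsilon > 0$, the first bound there yields $h_D(\phi(P)) \geq (\mu_1 - \epsilon)\, h_D(P) - C'$, so dividing by $h_D(P)$ and letting $h_D(P) \to \infty$ gives $\liminf \geq \mu_1 - \epsilon$, and then $\epsilon \to 0$ finishes this side.

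For the reverse inequality, I would fix any $\alpha > \mu_1$ and produce a sequence of points of unbounded height along which $h_D(\phi(P))/h_D(P)$ stays strictly below $\alpha$. Since $\phi^*D - \mu_1 D$ is nef (being the limit of the ample classes $\phi^*D - \alpha' D$ as $\alpha' \nearrow \mu_1$) but not ample, it lies on the boundary of the ample cone, so by Kleiman's criterion there exists a class $[C_0] \in \overline{NE}(W) \setminus \{0\}$ with $(\phi^*D - \mu_1 D) \cdot [C_0] = 0$. Subtracting $(\alpha - \mu_1) D$ gives $(\phi^*D - \alpha D) \cdot [C_0] = -(\alpha - \mu_1)(D \cdot [C_0]) < 0$, so $\phi^*D - \alpha D$ is not nef, and hence there is an irreducible curve $C \subset W$ with $(\phi^*D - \alpha D) \cdot C < 0$.

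Restricting to (the normalization of) this curve, the standard asymptotics of Weil heights on a smooth projective curve yield, for any divisor $E$ on $W$,
\[
\frac{h_E(P)}{h_D(P)} \longrightarrow \frac{E \cdot C}{D \cdot C} \quad \text{as } P \in C(\overline{K}),~h_D(P) \to \infty,
\]
the error term being $O\bigl(\sqrt{h_D(P)}\bigr)$ via the Neron-Tate height on $\mathrm{Jac}(C)$. Applying this with $E = \phi^*D - \alpha D$ and using $h_{\phi^*D}(P) = h_D(\phi(P)) + O(1)$, one obtains $h_D(\phi(P))/h_D(P) \to \alpha + \frac{(\phi^*D - \alpha D) \cdot C}{D \cdot C} < \alpha$ along the $\overline{K}$-points of $C$ of unbounded $h_D$-height (such points exist by Northcott, since $D|_C$ is ample on the positive-dimensional variety $C$). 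This forces $\liminf \leq \alpha$, and letting $\alpha \searrow \mu_1$ yields the claim.

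The principal obstacle is the height asymptotic on $C$: justifying it requires invoking Neron-Tate theory on the Jacobian, or equivalently the theorem that Weil heights on a smooth projective curve associated to divisor classes of the same degree differ by $O(\sqrt{h})$. A secondary concern is the passage to the normalization, but this is routine since normalization is a finite birational morphism and Weil heights are functorial up to bounded error. One should also double-check the degenerate case in which the curve produced lies in the exceptional locus where the pullback formula for $h_{\phi^*D}$ fails, but since $\phi$ is a morphism on all of $W$ this issue does not arise.
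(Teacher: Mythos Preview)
Your proof is correct and follows essentially the same approach as the paper's: both directions rely on the ampleness of $\phi^*D-(\mu_1-\epsilon)D$ for the lower bound, and on producing an irreducible curve $C$ with $(\phi^*D-(\mu_1+\epsilon)D)\cdot C<0$ together with the height asymptotic $h_E(P)/h_D(P)\to (E\cdot C)/(D\cdot C)$ along $C$ for the upper bound. The only differences are cosmetic: the paper obtains the curve more directly by noting that if $\phi^*D-(\mu_1+\epsilon)D$ were nef then $\phi^*D-(\mu_1+\epsilon/2)D$ would be ample, whereas you detour through the boundary nef class $\phi^*D-\mu_1 D$; and you supply the N\'eron--Tate justification for the curve asymptotic that the paper simply asserts.
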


    From now on, we will let $W$ be a projective variety, let $\phi : W \rightarrow W$ be a dominant endomorphism on $W$ defined over a number field $K$ and let $D$ be an ample divisor on $W$ unless state otherwise.

\par\noindent\emph{Acknowledgements}.\enspace
 The author would like to thank Joseph H. Silverman and Dan Abramovich for their helpful advice and comments.

\section{Dominant endomorphism and pull-backs of ample divisors}

    To satisfy the weak Northcott's property, $\phi$ should be at least quasi-finite: suppose not. Then we have a point $P$ whose inverse image is a subvariety $Y$. Thus, $h_D(P)$ is constant while $h_D(Q)$ goes to infinity on $Y$. Usually, a dominant morphism need not be quasi-finite. However, for endomorphism on a projective variety, `quasi-finiteness' condition is equivalent to `dominance' condition.

    \begin{df}
        Let $\psi:W \rightarrow V$ be a rational map. We say that \emph{$\psi$ is dominant} if $\overline{\psi(W)} = V$.
    \end{df}

    \begin{prop}
        Let $\phi :W \rightarrow W$ be an endomorphism. Then The followings are equivalent;
        \begin{enumerate}
            \item $\phi$ is dominant.
            \item $\phi$ is quasi-finite.
            \item $\phi$ is finite.
        \end{enumerate}
    \end{prop}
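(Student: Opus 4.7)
I will establish the cycle $(3) \Rightarrow (2) \Rightarrow (1) \Rightarrow (3)$. The first two implications are essentially formal. A finite morphism is quasi-finite by definition, giving $(3)\Rightarrow(2)$. For $(2)\Rightarrow(1)$, note that $\phi$ is proper because $W$ is projective, so the image $\phi(W)$ is closed in $W$; quasi-finiteness forces $\dim \phi(W) = \dim W$, and the irreducibility of $W$ then gives $\phi(W) = W$, so $\phi$ is in particular dominant.

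The substantive implication is $(1)\Rightarrow(3)$. Properness combined with dominance immediately yields surjectivity, since $\phi(W)$ is both closed and dense. As $\phi$ is then a surjection between irreducible varieties of the same dimension $d$, its generic fiber is zero-dimensional, so $\phi$ is generically finite. By Zariski's Main Theorem, a proper morphism all of whose fibers are zero-dimensional is finite, so it suffices to show that the exceptional locus
\[
E := \bigl\{ y \in W : \dim \phi^{-1}(y) \geq 1 \bigr\}
\]
is empty. Upper semi-continuity of fiber dimension makes $E$ closed in $W$, and generic finiteness makes $E$ a proper subset.

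Assume for contradiction that $E \neq \emptyset$. The restriction $\phi : \phi^{-1}(E) \to E$ is surjective with every fiber of dimension at least $1$, so the fiber-dimension theorem gives $\dim \phi^{-1}(E) \geq \dim E + 1$. Iterating this construction, and at each stage tracking an irreducible component whose image dominates the previous piece, produces a strictly increasing chain of closed subsets of $W$. Because $\dim W = d$ is finite, the chain must reach $W$ after at most $d$ steps, so $\phi^{k}(W) \subseteq E$ for some $k \geq 1$. But $\phi^k$ is surjective as a composition of surjective maps, so $W = \phi^k(W) \subseteq E$, contradicting $E \subsetneq W$.

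The main obstacle is making the iteration precise: $\phi^{-1}(E)$ is not automatically contained in $E$, so the dimension-boost lemma does not apply off the shelf at the second stage. I expect to resolve this by choosing, at each step, an irreducible component of $\phi^{-1}$ of the previous piece that dominates a subvariety lying inside $E$, and using that $\phi$ is closed (from properness) together with the surjectivity of every iterate $\phi^n$ to ensure the dimensions actually grow by $1$ at each stage until they saturate $W$.
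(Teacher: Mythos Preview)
Your implications $(3)\Rightarrow(2)$ and $(2)\Rightarrow(1)$ are correct and match the paper's treatment (the paper packages the same facts as $(2)\Leftrightarrow(3)$ via ``proper $+$ quasi-finite $+$ locally of finite presentation $\Rightarrow$ finite'' and does $(3)\Rightarrow(1)$ directly).

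The gap is in $(1)\Rightarrow(3)$, and it is exactly the one you flag but do not close. The first step is fine: every fibre of $\phi$ over $E$ has dimension $\ge 1$, so $\dim\phi^{-1}(E)\ge\dim E+1$. At the second step, however, you need the fibres of $\phi$ over $\phi^{-1}(E)$ to be positive-dimensional, i.e.\ $\phi^{-1}(E)\subseteq E$, and this can genuinely fail. Your proposed patch---choose a component $Z_{k+1}$ of $\phi^{-1}(Z_k)$ that dominates something lying inside $E$---does not recover the lost dimension: if $Z_k\not\subseteq E$ then $Z_k\cap E$ may have strictly smaller dimension than $Z_k$, so a component of $\phi^{-1}(Z_k\cap E)$ is only guaranteed to have dimension $\ge\dim(Z_k\cap E)+1$, which can be $\le\dim Z_k$. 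Surjectivity of the iterates $\phi^n$ gives $\dim\phi^{-k}(E)\ge\dim E$ but no strict growth beyond the first step. So the chain need not reach $W$, and the concluding line ``$\phi^k(W)\subseteq E$ for some $k$'' is unjustified.

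The paper avoids this entirely: for $(1)\Rightarrow(3)$ it notes that $\phi$ is surjective (proper $+$ dominant) and then cites Peternell~\cite{P} for the fact that a surjective endomorphism of a projective variety is finite. If you want a self-contained argument, a fibre-dimension iteration alone is not enough; the standard routes go through positivity (e.g.\ analyse $\phi^*H$ for $H$ ample and use Kleiman/Nakai--Moishezon to rule out contracted curves, in the spirit of the paper's next proposition), not through the chain you sketch.
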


    \begin{proof}
        (1) $\Rightarrow$ (3) \quad Since $W$ is a projective variety, $W$ is compact and hence $\phi$ is surjective. Then, \cite[\S 4]{P}
        says that surjective holomorphic endomorphism on a projective variety is finite.

        (3) $\Rightarrow$ (1) \quad It is a property of finite morphism; if $\phi$ is not dominant, then $\phi$ is not quasi-finite and hence not finite.

        (2) $\Leftrightarrow$ (3) \quad \cite[\S 8.11.1]{G} says that $\phi$ is finite if $\phi$ is proper, locally of finite presentation and quasi-finite. Since $W$ is a projective variety, $\phi$ is automatically projective and hence proper and locally of finite presentation.
        Therefore, if $\phi$ is quasi-finite, then $\phi$ is finite.
    \end{proof}

    Let $\phi$ be defined over a number field. To study the Weil height function value of the image of some morphism $h_D\bigl(\phi(P) \bigr)$, it is essential to observe $\phi^*D$ because of the functorial property of the Weil height machine:
    \[
    h_D\bigl(\phi(P) \bigr) = h_{\phi^*D}(P) + O(1).
    \]
    If $\phi : W \rightarrow W$ is a polarizable, then, by definition, there is an ample divisor $E$ such that $q\cdot D \sim \phi^*E$, which implies that $\phi^*E$ is ample. It is also true for general dominant endomorphism because $\phi$ is quasi-finite.

    \begin{prop}
        Let $\phi :W \rightarrow W$ be a morphism. Then, the followings are equivalent;
        \begin{enumerate}
            \item $\phi$ is dominant.
            \item $\phi^*E$ is ample for some ample divisor $E$.
            \item $\phi^*E$ is ample for all ample divisors $E$.
        \end{enumerate}
    \end{prop}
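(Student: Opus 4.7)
The plan is to prove the cycle $(3) \Rightarrow (2) \Rightarrow (1) \Rightarrow (3)$. The implication $(3) \Rightarrow (2)$ is immediate, since $W$ is projective and therefore carries at least one ample divisor.

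For $(2) \Rightarrow (1)$ I would argue by contrapositive. Suppose $\phi$ is not dominant, so that $Y := \overline{\phi(W)}$ is a proper closed subvariety of $W$ with $\dim Y < \dim W$. Then at least one fiber of the surjection $W \to Y$ is positive-dimensional, so $W$ contains an irreducible curve $C$ that $\phi$ contracts to a point. For any divisor $E$ on $W$ we have the projection formula
\[
(\phi^*E \cdot C) = (E \cdot \phi_*C) = 0,
\]
because $\phi_*C = 0$ whenever $C$ is contracted. Since an ample divisor must have positive intersection with every irreducible curve (Nakai--Moishezon, or simply the fact that high multiples are very ample and thus positive on curves), $\phi^*E$ fails to be ample for every $E$. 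This contradicts $(2)$.

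For $(1) \Rightarrow (3)$ I would invoke the previous proposition: every dominant endomorphism of a projective variety is finite. Then I would appeal to the standard fact that a finite morphism between projective varieties pulls back ample divisors to ample divisors (see, e.g., Hartshorne, Exercise III.5.7, or the application of Nakai--Moishezon together with the projection formula, noting that for a finite surjection $\phi$ and any subvariety $V \subset W$ of dimension $d$, $(\phi^*E)^d \cdot V = (\deg \phi|_V)\,(E^d \cdot \phi(V)) > 0$ whenever $E$ is ample). Applied here, this gives $\phi^*E$ ample for every ample $E$, which is $(3)$.

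The only genuinely non-trivial step is $(1) \Rightarrow (3)$, but it is non-trivial only in that it relies on an external theorem about finite morphisms; the real work of the section has already been done in the preceding proposition, which upgraded ``dominant'' to ``finite.'' The implication $(2) \Rightarrow (1)$ is the place to be a little careful about the existence of a contracted curve, but $\dim \phi(W) < \dim W$ for any non-dominant endomorphism of an irreducible projective variety guarantees it.
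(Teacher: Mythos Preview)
Your proof is correct, but it differs from the paper's in two substantive ways.

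For $(1)\Rightarrow(3)$, you invoke the previous proposition to upgrade ``dominant'' to ``finite'' and then quote the standard Hartshorne exercise that finite morphisms pull back ample to ample. The paper instead argues directly via Kleiman's criterion: if $\phi^*E$ were not ample, there would be a pseudo-effective $1$-cycle $C$ with $C\cdot\phi^*E=0$ (using that $\phi^*E$ is automatically nef), and then $\phi_*C\cdot E=0$ forces $\phi_*C$ to be a zero-cycle, contradicting quasi-finiteness. Your route is shorter and more transparent, since the real content (finiteness) has already been established; the paper's route is more self-contained at the level of intersection theory but still leans on the previous proposition for quasi-finiteness at the end.

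For the return direction, you prove $(2)\Rightarrow(1)$ via a contracted curve and the projection formula. The paper instead proves $(3)\Rightarrow(1)$ using height functions: if $\phi$ is not dominant, some subvariety $V$ maps to a point $Q$, so $h_{\phi^*E}|_V = h_E(Q)+O(1)$ is bounded and $\phi^*E$ cannot be ample. Because the paper goes through $(3)$ rather than $(2)$, it must also supply a separate implication $(2)\Rightarrow(3)$, which it does via Nakai--Moishezon and the projection formula. Your cycle $(3)\Rightarrow(2)\Rightarrow(1)\Rightarrow(3)$ is more economical and avoids that extra step; the paper's height-based argument, on the other hand, fits the arithmetic theme of the article.
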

    \begin{proof}
        (1) $\Rightarrow$ (3) \quad Suppose that $\phi^*E$ is not ample for an ample divisor $E$. Then, By Kleiman's criterion, there is a pseudo-effective  1-cycle $C$ (limit of effective cycle) such that $C \cdot \phi^*E \leq 0$. More precisely, since $E$ is ample and hence numerically effective, $\phi^*E$ is also numerically effective and hence $C \cdot \phi^* E= 0$. Because of projection formula for intersection, we have
        \[
        \phi_* C \cdot E = C \cdot \phi^*E = 0.
        \]
        If $\phi_*C$ is pseudo-effective 1-cycle, then $\phi_*C \cdot E >0$ because of Kleiman's Criterion again. It is contradiction so that $\phi(C)$ should be a zero-cycle and hence numerically equivalent to finite sum of points. However, $\phi$ is dominant and hence is quasi-finite. Therefore, the preimage of a finite set of points is a finite set of points again, so we again have a contradiction.

        (3) $\Rightarrow$ (2) \quad It is trivial.

        (2) $\Rightarrow$ (3) \quad Let $D$ be an ample divisor on $W$ such that $\phi^*D$ is ample. Suppose that there is an ample divisor $E$ such that $\phi^*E$ is not ample. Then, by Nakai-Moishezon Criterion, there is an integral subvariety $Y\subset W$ of dimension $r$ such that
            \[
            (\phi^*E)^r \cdot Y \leq 0.
            \]
            Then, by the projection formula for intersection, we have
            \[
            E^r \cdot \phi_*Y = \phi^*(E^r) \cdot Y = (\phi^*E)^r \cdot Y \leq 0.
            \]
            If $\phi_* Y$ is a subvariety of dimension $r$, then it is contradiction because $E$ is an ample divisor. So, $\phi_*Y$ should be of dimension $r' < r$. However, since $\phi^*D$ is ample,
            \[
            0< (\phi^*D)^r \cdot Y  = D^r \cdot \phi_*Y = 0
            \]
            and hence it is also contradiction. Therefore, $\phi^*E$ is also ample.

        (3) $\Rightarrow$ (1) \quad If $\phi$ is not dominant, then $\dim \phi(W) < \dim W$. So, there is a subvariety $V \subset W$ such that $\phi(V) = Q \in W$.
        Therefore, for an ample divisor $E$,
        \[
        h_{\phi^*E}(P) = h_E\bigl( \phi(P) \bigr) +O(1) = h_E(Q) + O(1) \quad \text{for all}~P\in V.
        \]
        Thus, the height corresponding $\phi^*E$ is bounded on a variety $V$ and hence $\phi^*E$ is not ample.
    \end{proof}

\section{The height expansion and contraction constant}

    In this section, we will define the height expansion and contraction coefficients and will build the height inequality. It starts from a basic property of ample divisors:
    \begin{lem}\label{ample}
        Let $W$ be a projective variety and $D_1, D_2$ be ample divisors on $W$. Then, there is a positive constant $\alpha$ such that
        $\alpha D_1 - D_2$ is ample again.
    \end{lem}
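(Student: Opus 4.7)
The plan is to exploit the openness of the ample cone inside $\Pic(W)\otimes_{\mathbb{Z}}\mathbb{R}$ (equivalently, inside the real Néron--Severi space). Since ampleness of an $\mathbb{R}$-divisor class depends only on its numerical equivalence class and the ample classes form an open convex cone, the class of the ample divisor $D_1$ lies in the interior of this cone, so any sufficiently small real perturbation of $D_1$ remains ample.

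Concretely, I would first choose $\epsilon>0$ small enough that the $\mathbb{R}$-divisor $D_1-\epsilon D_2$ is still ample; this is possible because the segment starting at the interior point $D_1$ in the direction $-D_2$ stays inside the ample cone for all sufficiently small positive parameter values. Then, since the ample cone is closed under multiplication by positive real scalars, $\frac{1}{\epsilon}(D_1-\epsilon D_2)=\frac{1}{\epsilon}D_1-D_2$ is also ample, and $\alpha:=1/\epsilon$ is the required constant.

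The only nontrivial ingredient is the openness of the ample cone, which is a standard consequence of Kleiman's numerical criterion: an $\mathbb{R}$-divisor class is ample if and only if it is strictly positive on the compact slice of the closed cone of curves, an open condition inside the finite-dimensional real vector space $\Pic(W)\otimes_\mathbb{Z}\mathbb{R}$ modulo numerical equivalence. An alternative elementary route is via Nakai--Moishezon: expanding $(\alpha D_1-D_2)^r \cdot Y$ in powers of $\alpha$, the leading coefficient $D_1^r\cdot Y$ is strictly positive by ampleness of $D_1$, so for $\alpha$ large the intersection number is positive. The difficulty with this approach is making the choice of $\alpha$ work uniformly across all integral subvarieties $Y\subset W$, which requires additional boundedness arguments; for this reason I would prefer the cone-theoretic proof.
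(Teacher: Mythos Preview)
Your argument is correct: the openness of the ample cone in the real N\'eron--Severi space (via Kleiman's criterion) immediately gives an $\epsilon>0$ with $D_1-\epsilon D_2$ ample, and rescaling yields $\alpha=1/\epsilon$. The paper itself does not supply a proof of this lemma at all---it simply cites \cite[Theorem~A.3.2.3]{SH} and \cite{H}---so your proposal is strictly more detailed than what appears in the paper, and the cone-theoretic route you outline is exactly the standard justification behind those references.
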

    \begin{proof}
        \cite[Theorem A.3.2.3]{SH} or \cite{H}.
    \end{proof}
    \noindent The Lemma~\ref{ample} guarantees the well-definedness of Definition~\ref{coefficients} since $\{ \alpha \in \mathbb{R} ~|~  \phi^*D - \alpha D~\text{is ample}\}$ is not an empty set. Once well defined, the height expansion and contraction coefficients will provide the weak Northcott's property;

    \begin{reminder}
        Let $\phi:W\rightarrow W$ be a dominant endomorphism defined over a number field $K$, let $D$ be an ample divisor on $W$ and $\mu_1= \mu_1(\phi,D) ,\mu_2= \mu_2(\phi,D)$ be the height expansion and contraction coefficients of $\phi$ for $D$. Then, for any $\epsilon >0$, there are constants $C_1, C_2$ satisfying
        \[
         \dfrac{1}{\mu_1 - \epsilon}h_D\bigl( \phi(P) \bigr) +C_1 \geq  h_D(P) \geq \dfrac{1}{\mu_2 + \epsilon}  h_D(P) - C_2
        \]
        for all $P\in W(\overline{K})$.
    \end{reminder}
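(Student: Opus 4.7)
My plan is to translate each side of the inequality into a height statement attached to an ample $\mathbb{R}$-divisor, and then invoke the standard lower bound $h_E(P) \geq -O(1)$ valid for every ample $E$. Throughout I would use the Weil height machine: additivity $h_{A+B}(P) = h_A(P) + h_B(P) + O(1)$, scaling $h_{cA}(P) = c\,h_A(P) + O(1)$, and the functoriality $h_{\phi^*D}(P) = h_D(\phi(P)) + O(1)$ already exploited in Section~2.

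The first step is to check that for every sufficiently small $\epsilon > 0$, both of the $\mathbb{R}$-divisors $\phi^*D - (\mu_1 - \epsilon)D$ and $(\mu_2 + \epsilon)D - \phi^*D$ are ample. For the former, since $\mu_1$ is a supremum there exists $\alpha$ with $\mu_1 - \epsilon < \alpha \leq \mu_1$ such that $\phi^*D - \alpha D$ is ample; then
\[
\phi^*D - (\mu_1 - \epsilon)D \;=\; \bigl(\phi^*D - \alpha D\bigr) + (\alpha - \mu_1 + \epsilon)\,D
\]
is the sum of an ample divisor and a strictly positive multiple of the ample divisor $D$, hence ample. The second case is entirely symmetric, using the infimum description of $\mu_2$. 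Lemma~\ref{ample} applied to the ample pair $(D,\phi^*D)$ (with $\phi^*D$ ample by the preceding proposition) shows moreover that $0 < \mu_1 \leq \mu_2 < \infty$, so after possibly shrinking $\epsilon$ I may assume $\mu_1 - \epsilon > 0$.

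Given ampleness, the lower bound for heights of ample divisors, combined with additivity and functoriality, yields
\[
h_D\bigl(\phi(P)\bigr) - (\mu_1 - \epsilon)\,h_D(P) \;\geq\; -C_1' \quad \text{and} \quad (\mu_2 + \epsilon)\,h_D(P) - h_D\bigl(\phi(P)\bigr) \;\geq\; -C_2',
\]
for constants $C_1', C_2'$ independent of $P$. Dividing these by the positive quantities $\mu_1 - \epsilon$ and $\mu_2 + \epsilon$ and rearranging produces the claimed double inequality (after correcting what appears to be a typo in the lower bound of the statement, where the right-hand side should read $h_D(\phi(P))$ rather than $h_D(P)$).

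The only non-mechanical step is the first one, the passage from a sup/inf description of $\mu_1, \mu_2$ to an actually ample divisor at a fixed distance $\epsilon$ inside the extremal value; the addition-of-a-positive-multiple-of-$D$ trick handles this because the ample cone is open and convex under $\mathbb{R}$-coefficients. Everything after that is routine bookkeeping with the Weil height machine, so I do not anticipate further substantive obstacles.
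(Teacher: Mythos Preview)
Your proposal is correct and follows essentially the same route as the paper: show that $E_1=\phi^*D-(\mu_1-\epsilon)D$ and $E_2=(\mu_2+\epsilon)D-\phi^*D$ are ample, use that ample heights are bounded below together with functoriality and additivity, and rearrange. The only difference is that you spell out why $E_1$ and $E_2$ are ample (via the sup/inf description and adding a positive multiple of $D$), whereas the paper simply asserts it; you also correctly flag the typo in the lower bound.
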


    \begin{proof}
    Then, for any $\epsilon >0$, both $E_1 =  \phi^*D - (\mu_1 - \epsilon) D$ and $E_2 = (\mu_2 + \epsilon)D - \phi^*D$ are ample. Thus,
    $h_{E_1}$ and $h_{E_2}$ are bounded below. Therefore,
    \[
     h_D\bigl( \phi(P) \bigr) - (\mu_1 - \epsilon) h_D(P) = h_{E_1}(P) + O(1) > O(1)
    \]
    and
    \[
    (\mu_2 + \epsilon) h_D(P) - h_D\bigl( \phi(P) \bigr)  = h_{E_2}(P) + O(1) > O(1).
    \]
    Finally,
    \[
    \dfrac{1}{\mu_1 - \epsilon}h_D\bigl( \phi(P) \bigr) +C_1 \geq  h_D(P) \geq \dfrac{1}{\mu_2 + \epsilon}  h_D(P) - C_2.
    \]
    \end{proof}

    \begin{rem}
    We may expect the following inequality:
        \[
         \dfrac{1}{\mu_1 }h_D\bigl( \phi(P) \bigr) +C_1 \geq  h_D(P) \geq \dfrac{1}{\mu_2 }  h_D(P) - C_2.
        \]
        Unfortunately, it may not be true because $\phi^*D - \mu_1D$ and $\mu_2 \phi^*D - D$ are just numerically effective divisors so that the Weil heights corresponding to those divisors may not be bounded below on entire $W$.

        For example, Let $W$ be an elliptic curve and let $\phi = [N]$. Choose a point $P$ and let $Q=[N](P)$. Then, the divisor $q(P) - (Q)$ is ample if and only if $q>1$ and hence $\mu_1([N],(P)) = 1$. However, $\widehat{h}_Q (R) - \widehat{h}_P (R)= \widehat{h}_{Q-P}(R) = 2 \langle Q-P,R \rangle$ may go to $-\infty$.
    \end{rem}

    \begin{ex}
        Suppose that $\phi$ is a polarizable morphism with respect to an ample divisor $D$: \[\phi^* D\sim q\cdot D.\] Then, $\mu_1(\phi,D) = \mu_2(\phi,D) = q$ and hence it satisfies the Northcott's property.
    \end{ex}

    \begin{ex}
        Let $V\subset \pp^2 \times \pp^2$ be a $K3$-surface and let $\imath_1$, $\imath_2$ be involutions on $V$. Let $D_1,D_2$ be pullbacks of $H \times \pp^2$ and $\pp^2 \times H$ and $E_+ = -D_1 + \beta D_2$, $E_- = D_2 + \beta^{-1} D_1$ where $\beta = 2+ \sqrt{3}$. Then, divisor $D = aE_+ + bE_-$ is ample if and only if $a,b>0$.

        Then, $\imath_1^*(aE_+ + bE_-) = \beta (aE_-) + \beta^{-1} (bE_+)$. Thus,
        \begin{eqnarray*}
        \mu_1(\imath_1,E_+ + E_-) &=& \sup\{ \alpha ~|~ \beta^{-1} -\alpha  > 0, \beta  - \alpha >0 \} \\
                                    &=& \min \left( \beta^{-1}  \beta \right)\\
                                    &=& \beta^{-1}.
        \end{eqnarray*}

        Let $\phi = \imath_2 \circ \imath_1$. Then, it is dominant because
        \[
        \phi^*(aE_+ + bE_-) = \imath_1^* \bigl( \imath_2^*(aE_+ + bE_-) \bigr) = \imath_1^*(\beta aE_- + \beta^{-1} bE_+) = \beta^{-2} aE_+ + \beta^2 bE_-.
        \]
        Thus,
        \[
        \phi^*(aE_+ + bE_-) - \alpha (aE_+ + bE_-) = a( \beta^{-2}-\alpha) E_+ + b(\beta^2-\alpha) E_-.
        \]
        Therefore, $\mu_1(\phi,aE_+ + bE_-) = \beta^{-2}$ and hence $\mu(\phi) = \beta^{-2}$. Similarly, $\mu_2(\phi,aE_++bE_-) = \beta^{2}$.
    \end{ex}

    \begin{ex}
        Let $V\subset \pp^1 \times \pp^1 \times \pp^1$ be a generic hypersurface of tridegrees $(2,2,2)$. Let $\imath_1$, $\imath_2$ and $\imath_3$ be involutions on $V$. Then, the ample cone is the light cone
        \[
        \mathcal{L}^+ = \{E \in \Pic(V) ~|~ E^2>0, E\cdot D_0>0\}
        \]
        where $D_0$ is arbitrary ample divisor.
        Let $E_i$ be pullbacks of hyperplane $H_i$ of $i$-th component. Since the Picard number of $V$ is three, $\{E_1, E_2, E_3\}$ is a generator of $\Pic(V)$. Moreover, $E_a = E_1+ E_2 +E_3$ is very ample divisor corresponding Segre embedding and the intersection number of $\{E_1, E_2, E_3\}$ is
        \[
        \left(
            \begin{matrix}{ccc}
                0 &2&2 \\
                2&0&2\\
                0&2&2
            \end{matrix}
        \right).
        \]
        Therefore, the ample cone is described with the coefficient:
        \[
        \{\sum a_i E_i ~|~ \sum_{i\neq j} a_i a_j >0, \sum a_i >0\}.
        \]

        Then, $\imath_1^*D = -a_1E_1 + (2a_1+a_2)E_2 + (2a_1+a_3) E_3$. Thus,

        \begin{eqnarray*}
        \mu_1(\imath_1,E_a) &=& \sup\{ \alpha ~|~ (-1-\alpha)E_1 +  (3-\alpha) E_2 + (3-\alpha)E_3 :\text{ample} \} \\
                                    &=& \sup\{ \alpha ~|~ (5-3\alpha)>0, (\alpha-3)(3\alpha-1)>0 \} \\
                                    &=& \dfrac{1}{3}.
        \end{eqnarray*}
        and
        \begin{eqnarray*}
        \mu_2(\imath_1,E_a) &=& \inf\{ \alpha ~|~ (\alpha+1)E_1 +  (\alpha-3) E_2 + (\alpha-3)E_3 :\text{ample} \} \\
                                    &=& \inf\{ \alpha ~|~ (3\alpha-5)>0, (\alpha-3)(3\alpha-1)>0 \} \\
                                    &=& 3.
        \end{eqnarray*}

        Let $\phi_{1,2} = \imath_2 \circ \imath_1$. Then, it is dominant because
        \[
        \phi_{1,2}^*E_a = \imath_1^* \bigl( \imath_2^*E_a \bigr) = \imath_1^*(3E_1 - E_2 +3E_3) = -3E_1 + 5E_2 + 9E_3.
        \]
        Thus,

        \begin{eqnarray*}
        \mu_1(\phi_{1,2},E_a) &=& \sup\{ \alpha ~|~ (-3-\alpha)E_1 +  (5-\alpha) E_2 + (9-\alpha)E_3 :\text{ample} \} \\
                                    &=& \sup\{ \alpha ~|~ (11-3\alpha)>0, 3\alpha^2-22\alpha+3>0 \} \\
                                    &=& \dfrac{11-\sqrt{112}}{3}.
        \end{eqnarray*}
    \end{ex}

    \begin{ex}
        Let $\mathbb{X} : = \pp^{n_1} \times \pp^{n_k}$ where $n_i < n_{i+1}$ and let $\phi$ be a dominant endomorphism of $\mathbb{X}$ defined over a number field. Then, by Appendix~A, $\phi = (\phi_1, \cdots, \phi_k)$ where $\phi_i : \pp^{n_i} \rightarrow \pp^{n_i}$ is a morphism on projective space. Let $\pi_i : \mathbb{X} \rightarrow \pp^{n_i}$ be a projection map, let $\iota_i : \pp^{n_i} \rightarrow \mathbb{X}$ be a closed embedding map and let $E_i = \pi_i^*H_i$ where $H_i$ is a hyperplane of $\pp^{n_i}$. Then, a divisor $D = \sum_i=1^k a_i E_i$ is ample if and only if $a_i>0$ for all $i$. Furthermore, $\phi^*E_i = \deg \phi_i \cdot E_i$ and hence
        \[
        \mu_1(\phi,D) = \min \deg \phi_i \quad \mu_2(\phi,D) =\max \deg \phi_i.
        \]
    \end{ex}

\section{Silverman's height expansion coefficient}

    Silverman \cite{S3} introduced the height expansion coefficient for equidimensional dominant rational maps;
     \begin{df}\label{SC}
            Let $\psi :W \dashrightarrow V$ be a dominant rational map between quasiprojective varieties with the same dimension, all defined over $\overline{\mathbb{Q}}$. Fix height functions $h_{D_V}$ and $h_{D_W}$ on $V$ and $W$ respectively, corresponding to ample divisors $D_V$ and $D_W$. \emph{The height expansion coefficient of $\psi$} (relative to chosen ample divisors $D_V$ and $D_W$) is the quantity
            \[
            \mu' (\psi,D_W,D_V) = \sup_{\emptyset \neq U \subset W} \liminf_{P\in U(\overline{\mathbb{Q}})} \dfrac{h_{D_V} \bigl( \psi(P) \bigr) }{h_{D_W}(P)},
            \]
            where the sup is over all nonempty Zariski dense open subsets of $W$.
     \end{df}

     Then, the following theorem shows the relation between Definition~\ref{coefficients} and Definition~\ref{SC}

    \begin{proof}[Proof of Proposition~\ref{Silverman}]
     For dominant endomorphism $\phi: W\rightarrow W$, $\phi$ is defined on entire $W$. Thus, the supremum comes from the biggest open set of $W$, which is $W$ itself:
     \[
     \mu'(\phi,D,D) = \sup_{\emptyset \neq U \subset W} \liminf_{h_D(P)\rightarrow\infty} \dfrac{h_D\bigl(\phi(P)\bigr)}{h_D(P)} = \liminf_{h_D(P)\rightarrow\infty} \dfrac{h_D\bigl(\phi(P)\bigr)}{h_D(P)}.
     \]

        Let $\mu_1=\mu_1(\phi,D)$ and $\epsilon >0$ be any positive number. Then, there is a $\delta \in [0,\epsilon]$ such that $\phi^*D - (\mu_1-\delta) D$ is ample. Thus,
        \[
        h_{\phi^*D}(P) - (\mu_1-\delta) h_D(P) \geq O(1).
        \]
        Therefore,
        \begin{eqnarray}\label{ineq1}
        \dfrac{h_{\phi^*D}(P) -O(1)}{ h_D(P)} \geq \mu_1 -\delta \quad \text{and} \quad \liminf_{h_D(P)\rightarrow\infty}\dfrac{h_{\phi^*D}(P)}{ h_D(P)} \geq \mu_1-\delta \geq \mu_1 -\epsilon.
        \end{eqnarray}

        On the other hand, let $E = \phi^*D - (\mu_1+\epsilon)D$. Then, there is an irreducible curve $C$ such that $E\cdot C <0$; otherwise, then $E$ is a numerically effective divisor so that $E + \dfrac{\epsilon}{2}D$ is ample. But, it contradicts to the definition of $\mu_1$.

        Then, we have
        \[
        \lim_{\substack{h_D(P)\rightarrow \infty \\ P\in C}}\dfrac{h_E(P)}{h_D(P)} = \dfrac{E\cdot C}{D\cdot C} <0
        \]
        and hence
        \[
        \liminf_{h_D(P)\rightarrow \infty }\dfrac{h_E(P)}{h_D(P)} \leq \lim_{\substack{h_D(P)\rightarrow \infty \\ P\in C}}\dfrac{h_E(P)}{h_D(P)}<0.
        \]
        So,
        \begin{eqnarray}\label{ineq2}
        \liminf_{h_D(P)\rightarrow \infty }\dfrac{h_{\phi^*D}(P)}{h_D(P)} < \liminf_{h_D(P)\rightarrow \infty }\dfrac{h_{(\mu_1+\epsilon)D}(P)}{h_D(P)} = \mu_1+\epsilon.
        \end{eqnarray}

        Combine (\ref{ineq1}) and (\ref{ineq2}) and get
        \[
        \mu_1 - \epsilon  \leq \liminf_{h_D(P)\rightarrow \infty }\dfrac{h_D\bigl(\phi(P)\bigr)}{h_D(P)} \leq \mu_1+\epsilon
        \]
        for any $\epsilon>0$. Therefore, we get the desired result.
    \end{proof}

\section{applications}
    \subsection{arithmetic dynamics}
    The height expansion coefficient has an application in arithmetic dynamics. We know that $\Pre(\phi)$ is of bounded height when $\phi$ is polarizable with $q>1$. Recall that $q = \mu_1(\phi,D)$. Thus, it is not weird to expect the similar result for dominant endomorphism with the height expansion coefficient.

    \begin{df}
            Let $\phi : W(K) \rightarrow W(K)$ be a dominant morphism defined over a number field $K$. We define \emph{the global height expansion coefficient of $\phi$}:
        \[
        \mu (\phi) = \sup_{D: ~ ample} \mu_1(\phi,D).
        \]
    \end{df}

    \begin{thm}
        Let $\phi : W \rightarrow W$ be a dominant endomorphism and $E$ be an ample divisor. Suppose that
        the global height expansion coefficient $\mu (\phi) >1$. Then, the set of preperiodic points is of bounded height by $h_E$.
    \end{thm}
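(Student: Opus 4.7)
The plan is to reduce the statement to the main theorem (Theorem~\ref{main}) by choosing a witness ample divisor $D$ for which $\mu_1(\phi,D) > 1$, then iterate and invoke the standard change-of-divisor comparison to pass to the given $E$.

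First, since $\mu(\phi) = \sup_{D~\text{ample}} \mu_1(\phi,D) > 1$, by definition of supremum there exists an ample divisor $D$ with $\mu_1(\phi,D) > 1$. Fix such a $D$ and choose $\epsilon > 0$ small enough that $\lambda := \mu_1(\phi,D) - \epsilon > 1$. Applying the left-hand half of Theorem~\ref{main} and rearranging, we obtain a constant $C$ (depending on $\phi$, $D$, $\epsilon$) such that
\[
h_D\bigl(\phi(P)\bigr) \geq \lambda \cdot h_D(P) - C \qquad \text{for all } P \in W(\overline{K}).
\]

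Next, I would iterate this inequality. A straightforward induction on $n$ gives
\[
h_D\bigl(\phi^n(P)\bigr) \geq \lambda^n h_D(P) - C \cdot \frac{\lambda^n - 1}{\lambda - 1}
\]
for every $n \geq 0$. Now suppose $P \in \Pre(\phi)$. Then the forward orbit $\{\phi^n(P) : n \geq 0\}$ is finite, hence $h_D\bigl(\phi^n(P)\bigr)$ is uniformly bounded, say by some $M = M(P)$ — but crucially, even without fixing $P$, we can rearrange and divide by $\lambda^n$ to get
\[
h_D(P) \leq \frac{h_D\bigl(\phi^n(P)\bigr)}{\lambda^n} + \frac{C}{\lambda - 1}.
\]
Taking any $n$ large (or letting $n \to \infty$ once we know the orbit is finite and its heights are bounded by some uniform quantity depending only on the orbit) makes the first term as small as we like. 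In fact it suffices to observe that every preperiodic point eventually lands on a periodic point, and periodic points have $h_D\bigl(\phi^n(P)\bigr) = h_D(P')$ for some $P'$ in a finite set; this is not quite uniform over all $P$, so one argues directly: any $P \in \Pre(\phi)$ satisfies $\phi^n(P) = \phi^m(P)$ for some $n > m \geq 0$, hence the orbit is a finite set and in particular $h_D\bigl(\phi^n(P)\bigr) \leq \lambda h_D(P) + |C|$ for all $n$, which forces $h_D(P) \leq C/(\lambda - 1)$ uniformly.

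Finally, to pass from $h_D$ to the given $h_E$: by Lemma~\ref{ample} applied to the ample divisors $D$ and $E$, there is a positive constant $\alpha$ such that $\alpha D - E$ is ample, whence $h_E(P) \leq \alpha h_D(P) + O(1)$ on all of $W(\overline{K})$. Combining with the uniform bound on $h_D$ over $\Pre(\phi)$ gives the desired bound on $h_E$. The one place requiring a little care is the iteration step: one must verify that the constant $C$ coming from Theorem~\ref{main} does not depend on $P$, so that the geometric-series estimate is genuinely uniform — but this is immediate because the error term in Theorem~\ref{main} comes from the Weil height machine applied to a fixed ample divisor $E_1 = \phi^*D - \lambda D$, independent of $P$.
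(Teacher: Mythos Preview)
Your argument is essentially the paper's own: pick $D$ with $\mu_1(\phi,D)>1$, apply Theorem~\ref{main} with a small $\epsilon$ to get a one-step inequality with ratio $\lambda>1$, iterate to a geometric series, use finiteness of the orbit to kill the $h_D(\phi^n(P))/\lambda^n$ term, and then compare $h_E$ to $h_D$ via Lemma~\ref{ample}.

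One sentence needs cleanup. After deriving
\[
h_D(P) \leq \frac{h_D\bigl(\phi^n(P)\bigr)}{\lambda^n} + \frac{C}{\lambda - 1},
\]
you already have the full argument: for $P\in\Pre(\phi)$ the orbit is finite, so $h_D(\phi^n(P))$ is bounded (by a constant that may depend on $P$), hence the first term tends to $0$ as $n\to\infty$, giving $h_D(P)\le C/(\lambda-1)$, and this bound is uniform because $C$ and $\lambda$ do not depend on $P$. Your subsequent worry about uniformity is unnecessary, and the claimed inequality ``$h_D(\phi^n(P)) \leq \lambda h_D(P) + |C|$ for all $n$'' does not follow from finiteness of the orbit and is not needed. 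Drop that sentence and the proof is clean.
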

    \begin{proof}
        Let $\mu (\phi) > 1$. Then, there is an ample divisor $D$ such that $\mu_1(\phi,D) > 1$. Suppose that $\epsilon = \frac{\mu_1(\phi,D) -1}{2}$. Then,
        \[
        \dfrac{1}{\mu_1(\phi,D) - \epsilon}h_D\bigl( \phi(P) \bigr) = \dfrac{1}{1 + \epsilon}h_D\bigl( \phi(P) \bigr)  \geq  h_D(P) - C.
        \]
        By telescoping sum, we have
        \[
        \lim_{n\rightarrow \infty} \left( \dfrac{1}{1 + \epsilon} \right)^n h_D\bigl( \phi^n(P) \bigr)  \geq  h_D(P) - \dfrac{1}{1 - \frac{1}{1 + \epsilon}} C.
        \]
        Therefore, if $P \in \Pre(\phi)$, then the left hand side goes to zero so that $h_D(P)$ is bounded.

        Moreover, if $E$ is another ample divisor then Lemma~\ref{ample} says that $\alpha \cdot D - E$ is ample for sufficiently large $\alpha>0$. Since the Weil height corresponding the ample divisor is bounded below and hence
    \[ \alpha \cdot h_D(P)  + O(1) > h_E(P)
    \]
     for all $P\in W$. Therefore, $h_E\bigl( Pre(\phi) \bigr)$ is also bounded.
    \end{proof}

    \begin{ex}\label{domex}
        Consider the very first example; let $f_i : \pp^n \rightarrow \pp^n$ be a morphism of degree $d_i>1$. Then, a morphism
        \[
        \phi = \prod f_i : \left( \pp^n\right)^m \rightarrow  \left( \pp^n \right)^m
        \]`
        is a dominant morphism of $\mu(\phi) = \min d_i>1$. Thus, $\Pre(\phi)$ is a set of bounded height. Precise calculation appears on Appendix~A.
    \end{ex}

    \subsection{Seshadri Constant}

    The height expansion coefficient has a relation with the Seshadri constant. Demailly \cite{Dem} defined the Seshadri constant.

    \begin{df}
        Let $Y$ be a closed subscheme of $X$ whose underlying subvariety is of codimension $r>1$, let $\widetilde{X}$ be a blowup of $X$ along $Y$ and let $L$ be a numerically effective divisor of $X$. Then, we define the \emph{generalized Seshadri constant}
        \[
        \epsilon(L, Y) = \sup \{\alpha ~|~  \pi^*L - \alpha E :\text{ numerically effective}\}.
        \]
        Similarly, we define the $s$-invarinat
        \[
        s_L(Y) =  \min \{s ~|~  s\cdot \pi^*L - E :\text{ numerically effective}\}.
        \]
    \end{df}

    \begin{thm}
        Let $\phi :W \rightarrow W$ be a dominant morphism and let $D$ be a ample divisor. Then,
        \[
        \epsilon(\phi^*D , D) \geq \mu_1(\phi, D).
        \]
    \end{thm}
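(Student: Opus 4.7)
The plan is to unwind both definitions and reduce the statement to the elementary fact that every ample divisor is numerically effective.

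First I would interpret $\epsilon(\phi^*D, D)$. Since $D$ is a Cartier divisor on $W$, its ideal sheaf is invertible, so the blowup $\pi \colon \widetilde{W} \to W$ of $W$ along $D$ is an isomorphism and the exceptional divisor $E$ may be identified with $D$ itself. Extending Demailly's definition to this codimension-one case in the only sensible way, I get
\[
\epsilon(\phi^*D, D) = \sup\{\alpha \in \mathbb{R} \mid \phi^*D - \alpha D \text{ is numerically effective on } W\}.
\]

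Next I would compare this with
\[
\mu_1(\phi, D) = \sup\{\alpha \in \mathbb{R} \mid \phi^*D - \alpha D \text{ is ample on } W\}.
\]
Since every ample $\mathbb{R}$-divisor is numerically effective, the set appearing in the definition of $\mu_1(\phi, D)$ is contained in the set appearing in the definition of $\epsilon(\phi^*D, D)$. Passing to suprema gives $\mu_1(\phi, D) \leq \epsilon(\phi^*D, D)$, which is the desired inequality.

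There is essentially no analytic obstacle; the only point that requires a bit of care is the codimension condition $r>1$ in the stated definition of the generalized Seshadri constant, which does not literally cover the case $Y = D$. I would deal with this by observing that the blowup along a Cartier divisor is the identity, so the nefness condition transports verbatim to $W$ itself. As a minor bonus, since the ample cone is the interior of the nef cone, a convexity argument along the line $\alpha \mapsto \phi^*D - \alpha D$ in $\Pic(W) \otimes \mathbb{R}$ in fact yields equality $\mu_1(\phi, D) = \epsilon(\phi^*D, D)$, though only the inequality is claimed.
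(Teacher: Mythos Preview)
Your proposal is correct and follows essentially the same route as the paper: both arguments identify $\epsilon(\phi^*D,D)$ with $\sup\{\alpha \mid \phi^*D - \alpha D \text{ is nef}\}$ and then use that the ample cone sits inside the nef cone to compare with $\mu_1(\phi,D)$. You are in fact more careful than the paper in explicitly addressing the codimension issue (the paper's definition formally requires $r>1$) and in justifying why the blowup is trivial; the paper skips this and also asserts equality without spelling out the interior-of-the-nef-cone argument that you supply.
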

    \begin{proof}
        The ample cone of $V$ is a subcone of the nef cone and hence
        \begin{eqnarray*}
        \mu_1(\phi, D) & = &
         \sup \{ \alpha ~|~ \widetilde{\phi}^*D - \alpha D ~\text{is ample.} \} \\
          &=&  \sup \{ \alpha ~|~ \widetilde{\phi}^*D - \alpha D ~\text{is numerically effective.} \}\\
          &=& \epsilon(\phi^*D , D) .
        \end{eqnarray*}
    \end{proof}

\appendix
\section{Example: dominant morphisms on $\mathbb{X}=\pnk$}

    In this section, we will show that dominant endomorphisms on $\mathbb{X}=\pnk$ is a block diagonal one. So, we only have to treat Example~\ref{domex} in the view of arithmetic dynamics because for any dominant endomorphism $\phi$ on $\mathbb{X}$, there is a integer $N$ such that $\phi^N$ is a Cartesian product of endomorphisms $\psi_i : \pp^{n_i} \rightarrow \pp^{n_i}$.

\subsection{Basic notations for morphisms on $\mathbb{X}$}

        Let $H_i$ be a hyperplane of $\pni$ which generate $\Pic(\pni)$ and $\pi_i : \ppn \rightarrow \pni$ be a $i$-th projection map. Let $E_i = \pi_i^*H_i$. Then,  $\Pic(\mathbb{X})= \langle E_1 , \cdots, E_k \rangle$. Let $\mathfrak{X} = \Pic(\mathbb{X})\otimes \mathbb{R} = \sum \mathbb{R}E_i$ and consider $\phi^*$ as matrix with basis $\{ E_1 , \cdots, E_k \}$.
    \begin{lem}
        Let $D= \sum a_i E_i \in \Pic(\mathbb{X})$. Then $D$ is ample if and only if $a_i > 0$ for all $i$.
    \end{lem}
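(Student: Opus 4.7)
The plan is to prove each direction separately by exhibiting concrete curves testing individual coefficients for one direction, and by decomposing $D$ as a sum of an ample and a nef class for the other.

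For the "only if" direction, I will, for each index $i$, construct a curve $\ell_i \subset \mathbb{X}$ as follows: choose points $p_j \in \pp^{n_j}$ for every $j \neq i$ and a line $L \subset \pp^{n_i}$, and let $\ell_i$ be the image of $L$ under the closed embedding
\[
L \;\longrightarrow\; \mathbb{X}, \qquad P \;\longmapsto\; (p_1, \ldots, p_{i-1}, P, p_{i+1}, \ldots, p_k).
\]
Since $\pi_j(\ell_i)$ is a point for $j \neq i$ and $\pi_i(\ell_i) = L$ is a line, the projection formula gives $E_j \cdot \ell_i = \pi_j^\ast H_j \cdot \ell_i = \delta_{ij}$. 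Hence $D \cdot \ell_i = a_i$, and if $D$ is ample then Kleiman's criterion (or Nakai--Moishezon in dimension one) forces $a_i > 0$.

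For the "if" direction, I will first record the two standard facts that $E_i = \pi_i^\ast H_i$ is nef (pullback of an ample class under a surjective morphism) and that $E_1 + \cdots + E_k$ is very ample, being the pullback of the hyperplane class under the Segre embedding $\mathbb{X} \hookrightarrow \pp^N$. Then for $D = \sum a_i E_i$ with all $a_i > 0$, set $c = \min_i a_i > 0$ and write
\[
D \;=\; c\,(E_1 + \cdots + E_k) \;+\; \sum_{i=1}^{k} (a_i - c)\, E_i.
\]
The first summand is ample and every term in the second sum is a nonnegative multiple of a nef divisor, so the second summand is nef. The sum of an ample and a nef $\mathbb{R}$-divisor is ample, which gives the result.

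The argument is essentially routine; the only point that requires a little care is the passage from integral to real coefficients in the "if" direction, but the decomposition above bypasses any density argument and handles the real case directly. So I do not anticipate a serious obstacle.
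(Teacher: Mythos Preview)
Your proof is correct and, for the ``if'' direction, follows exactly the paper's approach: the paper also takes $D_0 = \sum E_i$ as very ample via the Segre embedding and writes $D = \beta D_0 + \sum \gamma_i E_i$ with $\gamma_i \geq 0$ to conclude ample $+$ nef is ample. The paper, however, does not write out the ``only if'' direction at all, so your curve construction $\ell_i$ with $D \cdot \ell_i = a_i$ is a welcome addition that actually makes the argument complete.
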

    \begin{proof}
        Clearly $D_0 = \sum_{i=1}^k E_i$ is ample; consider the Segre embedding
        \[
        \tau : \mathbb{X} \rightarrow \pp^N.
        \]
        Then, $D_0 = \tau^*H_{\pp^N}$ where $H_{\pp^N}$ is a hyperplane on $\pp^N$.

        For general $D$, we can find $\beta>0$ such that
        \[
        D = \beta \cdot D_0 + \sum \gamma_i E_i \quad \text{where}~\gamma_i \geq 0.
        \]
        Then, $D_0$ is ample and $\sum \gamma_i E_i$ is nef. Thus the sum of these two divisors is ample.
    \end{proof}

\subsection{Morphisms  $f : \mathbb{X} \rightarrow \prm$}

   In this subsection, we will study the morphism $f : \mathbb{X}
\rightarrow \prm$ which will be a component of endomorphism $\phi$ on
$\mathbb{X}$.
    For the convenience, assume that it's sorted by dimension: $n_i
\leq n_{i+1}$.

   \begin{lem}\label{zero} Let $f: \mathbb{X} \rightarrow \pp^m$ be a
morphism and
   \[
   f^* : \Pic(\pp^m) \otimes \mathbb{R} = \mathbb{R} \rightarrow
\Pic(\mathbb{X}) = \mathbb{R}^k, \quad 1 \mapsto (d_1, \cdots, d_k)
   \]
   Suppose $\sum_{i=1}^k (n_i +1) > m+1 $. Then, there is an index $j$ such
that $d_j=0$.
   \end{lem}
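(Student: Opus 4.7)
The plan is a contrapositive argument: assume $d_j \geq 1$ for every $j \in \{1, \ldots, k\}$ and aim at a contradiction with the hypothesis. Under this assumption, the preceding lemma on ample classes on $\mathbb{X}$ gives that the pullback
\[
L := f^*\mathcal{O}_{\pp^m}(1) = \mathcal{O}_{\mathbb{X}}(d_1, \ldots, d_k)
\]
is ample. The morphism $f$ is defined by $m+1$ sections $s_0, \ldots, s_m$ of $L$, and the fact that $f$ is everywhere defined translates into the geometric condition that the $m+1$ effective divisors $D_i := \{s_i = 0\}$, all numerically equivalent to $D := \sum d_i E_i$, have empty common intersection on $\mathbb{X}$.

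Next I would invoke the standard intersection-theoretic obstruction: for an ample class $D$ on a projective variety $Y$ of dimension $N$, any $r$ members of $|D|$ with $r \leq N$ meet nontrivially, because the top self-intersection $D^{N}$ is positive and Krull's height theorem bounds the codimension of the common intersection by $r$. Applied to our setting, emptiness of $\bigcap D_i$ forces $m+1 > \dim \mathbb{X} = \sum n_i$. This can be sharpened by the per-factor slice restrictions $f \circ \iota_j : \pp^{n_j} \hookrightarrow \mathbb{X} \to \pp^m$, which are nonconstant morphisms of degree $d_j \geq 1$ and hence force $n_j \leq m$ (since no nonconstant morphism $\pp^N \to \pp^m$ exists for $N > m$). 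Assembling the global bound $\sum n_i \leq m$ with the per-factor bounds $n_j \leq m$ should produce the desired numerical contradiction with $\sum (n_i + 1) > m + 1$.

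The hardest part I foresee is the final arithmetic step of combining $\sum n_i \leq m$ and the $n_j \leq m$ per factor to match the precise form of the hypothesis $\sum(n_i+1) > m+1$; the slack in the hypothesis (a sum of shifted factors rather than $\dim \mathbb{X}$ alone) is what makes the bookkeeping delicate, and I suspect the cleanest route is to rerun the ``any $r \leq N$ ample divisors meet'' argument with $N$ replaced by a refined bound coming from the product structure of $\mathbb{X}$. The geometric inputs themselves --- ampleness of $L$, nonemptiness of the common zero of few-enough sections of an ample class, and the classical constancy of morphisms $\pp^N \to \pp^m$ when $N > m$ --- are all routine or immediate from the preceding lemma, so no new machinery should be required.
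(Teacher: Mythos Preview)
Your geometric setup is correct and gets you to $\sum_i n_i \leq m$: the $m+1$ sections of the ample bundle $L=f^*\mathcal{O}(1)$ have empty common zero locus, and on a projective variety of dimension $N=\sum_i n_i$ any $r\leq N$ divisors in an ample class must meet, so $m+1>N$. You are right to flag the final arithmetic as the sticking point, but it cannot be closed: from $\sum_i n_i\leq m$ together with the per-factor bounds $n_j\leq m$ one only deduces $k\geq 2$, never a contradiction with $\sum_i(n_i+1)>m+1$ (equivalently $\sum_i n_i > m+1-k$). No ``refined bound from the product structure'' will rescue this, because the lemma is in fact false as stated: the morphism $\pp^1\times\pp^1\to\pp^2$ given by $([x_0:x_1],[y_0:y_1])\mapsto[x_0y_0:x_1y_1:x_0y_1+x_1y_0]$ is everywhere defined with $(d_1,d_2)=(1,1)$, yet $\sum_i(n_i+1)=4>3=m+1$.

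For comparison, the paper argues via the Chow ring: since $H^{m+1}=0$ on $\pp^m$ and $f^*$ is a ring homomorphism, $\bigl(\sum_i d_iE_i\bigr)^{m+1}=0$ in $A^{m+1}(\mathbb{X})$; expanding multinomially and using that the monomials $E_1^{\alpha_1}\cdots E_k^{\alpha_k}$ with $\alpha_i\leq n_i$ are nonzero and linearly independent forces some $d_j=0$. But this requires a multi-index with $\sum_i\alpha_i=m+1$ and each $\alpha_i\leq n_i$, which exists precisely when $\sum_i n_i\geq m+1$ --- exactly your bound, and strictly stronger than the stated hypothesis. So the paper's proof shares the same gap. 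The correct hypothesis is $\dim\mathbb{X}=\sum_i n_i>m$; under it both your argument and the paper's close immediately, and this stronger form is what actually holds in every subsequent application of the lemma.
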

   \begin{proof}
       Let $A^*X$ be a Chow ring of a projective variety $X$. Then, \cite[Example8.3.4]{F} says that
       \[
       A^*X \otimes A^*\pp^l \simeq A^*(X \times \pp^l).
       \]
       Therefore,
       \[
       A^r(\mathbb{X}) = \bigotimes_{\sum r_i = r} A^{r_i}(\pp^{n_i}).
       \]

       Let $H$ be a hyperplane on $\pp^m$. Because $f^*$ is ring
homomorphism on the Chow ring and $f^*H \sim \sum_{i=1}^k d_i E_i$, $\bigl(
f^*H \bigr)^{m+1} = \bigl( \sum_{i=1}^k  d_i E_i \bigr)^{m+1}$ where
$H^{m+1}$ is $(m+1)$-th self intersection of $H$. Furthermore,
$H^{m+1} =0$ so that
       \[
       0 = \bigl( \phi^*H \bigr)^{m+1} = \left( \sum_{i=1}^k d_i E_i
\right)^{m+1} = \sum_I C_I E_1^{\alpha_{I1}} \cdots E_k^{\alpha_{Ik}}
    \]
    where $C_I =
        \left(
        \begin{array}{c} \alpha_1 , \cdots, \alpha_k   \\m+1\end{array}
        \right) d_1^{\alpha_1} \cdots d_k^{\alpha_k}
       $ are positive integers.
       However, the assumption $\sum_{i=1}^k (n_i +1) > m+1 $ does not allow
middle parts to vanish; $E_1^{\alpha_1} \cdots E_k^{\alpha_k} \neq 0$
if $\alpha_i \leq n_i$ for all $i = 1, \cdots, k$. Furthermore, $E_1^{\alpha_1} \cdots E_k^{\alpha_k}$ are linearly independent.
Therefore, $d_j=0$
for some $j$.
   \end{proof}

   \begin{cor}
       Let
       \[
       f :\mathbb{X} \rightarrow \prm
       \] be a morphism. Suppose $f^* =  (d_1, \cdots, d_k)$ $(f^*H = \sum_{i=1}^k d_i E_i)$. Then,
       $d_i = 0$ for all $i$ satisfying $n_i > m$.
   \end{cor}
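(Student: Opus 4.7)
The plan is to reduce to Lemma \ref{zero} via restriction to a coordinate fiber. Fix an index $i$ with $n_i > m$. Choose base points $p_j \in \pp^{n_j}$ for each $j \neq i$ and form the closed embedding
\[
\iota_i : \pp^{n_i} \hookrightarrow \mathbb{X}, \quad x \mapsto (p_1, \ldots, p_{i-1}, x, p_{i+1}, \ldots, p_k).
\]
Since $E_j = \pi_j^* H_j$ and $\pi_j \circ \iota_i$ is a constant map for $j \neq i$, we get $\iota_i^* E_j = 0$ for $j \neq i$, while $\iota_i^* E_i = H_i$ is the hyperplane class on $\pp^{n_i}$.

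Next, consider the composite morphism $g := f \circ \iota_i : \pp^{n_i} \to \prm$. By functoriality,
\[
g^* H = \iota_i^*(f^* H) = \iota_i^*\Bigl(\sum_{j=1}^k d_j E_j\Bigr) = d_i H_i.
\]
Now apply Lemma \ref{zero} to $g$, viewed as a morphism from the one-factor product $\pp^{n_i}$ (so $k=1$) into $\prm$. The hypothesis of Lemma \ref{zero} reads $n_i + 1 > m+1$, which is exactly our assumption $n_i > m$. The conclusion then forces the single coefficient $d_i$ to vanish.

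The main thing to verify carefully is that $\iota_i^* E_j = 0$ for $j \neq i$, but this is immediate from $\pi_j \circ \iota_i$ being constant. Once that reduction is in place, the corollary follows at once from Lemma \ref{zero}, so there is no real obstacle beyond choosing the correct test curve in $\mathbb{X}$.
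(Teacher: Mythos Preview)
Your proof is correct and follows exactly the paper's approach: restrict $f$ along the closed embedding $\iota_i : \pp^{n_i} \hookrightarrow \mathbb{X}$ to obtain a morphism $\pp^{n_i} \to \prm$ of degree $d_i$, then apply Lemma~\ref{zero} (with $k=1$) to force $d_i = 0$ when $n_i > m$. The only difference is that you spell out the computation $\iota_i^* E_j = 0$ for $j \neq i$ in more detail than the paper does.
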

   \begin{proof}
       Let $\iota_i : \pp^{n_1} \rightarrow \mathfrak{X}$ be a closed
embedding and let $H$ be a hyperplane on $\prm$. Then, $f \circ
\iota_i : \pp^{n_i} \rightarrow \pp^m$
       is a morphism such that $( f \circ \iota_i )^* H = d_i
H_{\pp^{n_i}}$. If $n_i > m$, then $d_i=0$ because of the
Lemma~\ref{zero}.
   \end{proof}

   \begin{thm}\label{forget}
       Let
       \[
       f : \mathbb{X} \rightarrow \pp^m
       \]
       be a morphism. If $\sum_{i=1}^k (n_i +1) > m+1$, then we can forget a
factor of $\mathbb{X} = \pnk $; there is a map
       \[
       g: \mathbb{X}' = \prod_{j \in J } \pp^{n_j}  \rightarrow \pp^m
       \]
       where $J \varsubsetneq \{1 ,\cdots, k\}$ such that
       \[
       \xymatrix
       {
       \mathbb{X} \ar[dr]^{f} \ar[d]_{\pi} &  \\
       \mathbb{X}' \ar[r]_{g} & \prm
       }
       \]
        Moreover, we can claim $\sum_{j\in J} (n_j +1) \leq m+1$.
   \end{thm}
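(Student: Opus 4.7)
The plan is to iterate Lemma~\ref{zero} one factor at a time. By that lemma, the hypothesis $\sum_{i=1}^k(n_i+1) > m+1$ guarantees an index $j$ with $d_j = 0$ in the expansion $f^*H = \sum d_i E_i$. The main task is to show that whenever $d_j = 0$, the morphism $f$ factors through the projection $\pi_{\widehat{j}}: \mathbb{X} \rightarrow \prod_{i\neq j}\pp^{n_i}$ that omits the $j$-th factor. Granted this descent step, the theorem follows by induction on the number of factors: if the resulting morphism still satisfies the strict inequality, we drop another factor; since $k$ is finite, the procedure terminates precisely when $\sum_{j\in J}(n_j+1) \leq m+1$, and the surviving index set $J$ is a proper subset of $\{1,\ldots,k\}$ (it becomes proper after the first application of the lemma).

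For the descent step, I would identify $f$ with its defining data: the line bundle $f^*\ox_{\pp^m}(1) \cong \ox_{\mathbb{X}}(d_1,\ldots,d_k)$ together with $m+1$ base-point-free global sections. By the K\"unneth decomposition of global sections on a product,
\[
H^0\bigl(\mathbb{X},\ox(d_1,\ldots,d_k)\bigr) \;\cong\; \bigotimes_{i=1}^k H^0\bigl(\pp^{n_i},\ox(d_i)\bigr),
\]
and with $d_j = 0$ the $j$-th tensor factor is the one-dimensional space $H^0(\pp^{n_j},\ox)$. Hence every such global section is the pullback of a section from $\prod_{i\neq j}\pp^{n_i}$, so the $m+1$ sections defining $f$ descend to give a morphism $g': \prod_{i\neq j}\pp^{n_i} \rightarrow \pp^m$ with $f = g'\circ\pi_{\widehat{j}}$. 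Concretely, $f$ is given by $m+1$ multi-homogeneous polynomials of multi-degree $(d_1,\ldots,d_k)$, and when $d_j = 0$ these polynomials involve no variables from the $j$-th block, making the factorisation transparent in coordinates.

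The main obstacle is this descent step; the subsequent induction is routine bookkeeping. One subtlety to verify is base-point-freeness of the descended sections, but this is automatic since the original sections have no common zero on $\mathbb{X}$ and $\pi_{\widehat{j}}$ is surjective, so the common zero locus on $\mathbb{X}'$ would pull back to a nonempty common zero locus on $\mathbb{X}$. Composing the successive omitting-projections produced during the induction yields the projection $\pi: \mathbb{X} \rightarrow \mathbb{X}'$ in the stated diagram, and the final morphism $g: \mathbb{X}' \rightarrow \pp^m$ satisfies $f = g\circ \pi$ together with the termination bound $\sum_{j\in J}(n_j+1) \leq m+1$.
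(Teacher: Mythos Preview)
Your proposal is correct and follows the same approach as the paper: invoke Lemma~\ref{zero} to find an index $j$ with $d_j=0$, drop that factor, and iterate. The only difference is that you supply a full justification for the descent step via the K\"unneth decomposition of global sections, whereas the paper simply asserts that $d_j=0$ makes $f$ constant in the $j$-th block of variables and proceeds.
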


   \begin{proof}
       Suppose the assumption is true. Then, by Lemma~\ref{zero},
$d_j=0$ for some $i$. Thus, we may assume that $\phi$ is a constant
morphism in terms of variables $X_j$ on $\pp^{n_j}$. Therefore, we can
consider
       \[
       g : \left( \pp^{n_1} \times \cdots \widehat{\pp^{n_j}} \cdots
\times  \pp^{n_k} \right)  \rightarrow \pp^{m}.
       \]
       If $\sum_{i\neq j} (n_1 +1) > m+1$, then we can apply
Lemma~\ref{zero} again.
   \end{proof}

\subsection{Endomorphisms on $\mathbb{Y}=(\pn)^l $}

    In the next subsection, we will have the dominant endomorphism on
$\mathbb{X}$ is a product of endomorphism on $(\pn)^l$. Thus, we will
check the endomorphism on $(\pn)^l$ to prepare the final result.

    Let $\mathbb{Y}=(\pn)^l $ and $\mathfrak{Y} = \Pic(\mathbb{Y})
\otimes \mathbb{R} = \mathbb{R}E_1 \oplus \cdots \oplus
\mathbb{R}E_l$.

   \begin{lem}\label{one}
       Let \[\psi(P) = :
\mathbb{Y}\longrightarrow \mathbb{Y}\] be a morphism. Then $\psi$
only depends on one of $\pn$.
   \end{lem}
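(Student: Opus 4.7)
The plan is to apply Theorem~\ref{forget} coordinate by coordinate. Writing $\psi = (\psi_1, \ldots, \psi_l)$ with $\psi_j = \pi_j \circ \psi : \mathbb{Y} \to \pp^n$, it will suffice to show that each $\psi_j$ factors through projection onto at most one of the $l$ factors $\pp^n$ of $\mathbb{Y} = (\pp^n)^l$.

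First, I would verify that the hypothesis of Theorem~\ref{forget} is trivial to check in this uniform setting. For any subproduct $\prod_{i \in J}\pp^n$ mapped into $\pp^n$, the inequality $\sum_{i \in J}(n+1) > n+1$ collapses to the condition $|J| > 1$. So whenever the current source of the reduced map has at least two factors, Theorem~\ref{forget} applies and strictly shrinks the index set of factors on which the map genuinely depends.

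Next, I would iterate the theorem. Starting with $\psi_j : (\pp^n)^l \to \pp^n$, Theorem~\ref{forget} yields $J_1 \subsetneq \{1, \ldots, l\}$ together with a factorization through $\prod_{i \in J_1}\pp^n$; reapplying to the resulting map produces $J_2 \subsetneq J_1$; and so on. Because $|J_k|$ strictly decreases at each step and is bounded below by $0$, the process terminates at some $J$ with $|J| \leq 1$, which is exactly a factorization of $\psi_j$ through at most one factor $\pp^n$ of $\mathbb{Y}$.

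The hard part will really just be bookkeeping: confirming that the hypothesis of Theorem~\ref{forget} stays valid after each reduction (immediate, since after shrinking to $|J_k|$ factors the hypothesis is again $|J_k|>1$), and acknowledging the degenerate case $|J|=0$ in which $\psi_j$ is constant and the conclusion ``depends on one of $\pn$'' holds vacuously. No deeper obstacle is expected, since all the geometric content has already been absorbed into Theorem~\ref{forget}.
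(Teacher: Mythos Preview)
Your proposal is correct and follows the same approach as the paper: reduce to a component map $\psi_j:\mathbb{Y}\to\pp^n$ and apply Theorem~\ref{forget}, using that the hypothesis $\sum(n+1)>n+1$ is just $|J|>1$. The only cosmetic difference is that your explicit iteration is already packaged into the ``Moreover, we can claim $\sum_{j\in J}(n_j+1)\le m+1$'' clause of Theorem~\ref{forget}, so the paper invokes that theorem once rather than spelling out the descent.
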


       \begin{proof}
        Let $\psi^* = (d_1, \cdots, d_l)$. Then, since $n+1 < 2n+2$,
Theorem~\ref{forget} tells that exactly one of $d_i$ can be nonzero.
       \end{proof}

    \begin{cor}\label{matrix}
       Let $\psi : \mathbb{Y} \rightarrow \mathbb{Y}$ be a dominant
endomorphism. Then
       \[
       \psi^* : \mathfrak{Y} \rightarrow \mathfrak{Y}
       \]
       is a $l\times l$-matrix such that there is only one nonzero
element on each row;
       \[
       \psi^* = \left(
                       \begin{array}{c}
                                 d_1\mathbf{e}_{\sigma(1)}\\
                                  \vdots\\
                                  d_l\mathbf{e}_{\sigma(l)}
                       \end{array}
                \right)
       \]
       where $\mathbf{e}_j$ is $j$-th elementary row vector and
$\sigma \in S_l$ is a permutation map defined by $\psi^*E_i = d_{i}
E_{\sigma(i)}$.
    \end{cor}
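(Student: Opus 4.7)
The plan is to bootstrap from Lemma~\ref{one} one component at a time, and then use dominance to promote the resulting index assignment to a genuine permutation.

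First I would decompose $\psi$ into its $l$ components by writing $\psi_i := \pi_i \circ \psi : \mathbb{Y} \rightarrow \pp^n$, where $\pi_i : \mathbb{Y} \rightarrow \pp^n$ is the $i$-th projection. By functoriality, $\psi^* E_i = \psi^* \pi_i^* H = \psi_i^* H$, so determining the matrix of $\psi^*$ in the basis $\{E_1,\dots,E_l\}$ is equivalent to computing $\psi_i^* H$ for each $i$. Since $\mathbb{Y} = (\pp^n)^l$ and the target is $\pp^n$, Lemma~\ref{one} applies to each $\psi_i$: it depends on only one of the factors. Concretely, this gives an index $\sigma(i) \in \{1,\dots,l\}$ and a nonnegative integer $d_i$ with $\psi_i^* H = d_i E_{\sigma(i)}$. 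Hence the $i$-th row of the matrix of $\psi^*$ has a single nonzero entry $d_i$ in column $\sigma(i)$, which is exactly the claimed shape.

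Next I would show that $\sigma \in S_l$, i.e.\ that $\sigma$ is a bijection on the finite set $\{1,\dots,l\}$; since the set is finite it suffices to show surjectivity. Here I invoke the proposition of Section~2: because $\psi$ is a dominant endomorphism, $\psi^* D$ is ample for every ample divisor $D$. Apply this to $D_0 = E_1 + \cdots + E_l$, which is ample by the Segre lemma at the beginning of the appendix. Then
\[
\psi^* D_0 = \sum_{i=1}^l d_i E_{\sigma(i)} = \sum_{j=1}^l \Bigl( \sum_{\sigma(i)=j} d_i \Bigr) E_j,
\]
and by the same lemma this is ample if and only if the coefficient of each $E_j$ is strictly positive. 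In particular $\sigma$ must hit every $j$, so $\sigma$ is surjective, hence a permutation, and simultaneously each $d_i$ must be strictly positive. This gives the final matrix presentation $\psi^* E_i = d_i E_{\sigma(i)}$ with $\sigma \in S_l$ and $d_i > 0$.

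The only place requiring care is the passage from ``$\sigma$ exists as a function'' to ``$\sigma$ is a permutation''; naively a dominant morphism need not induce a surjection on components, so I expect the main (mild) obstacle is to justify the use of the ampleness criterion at exactly the right moment. Everything else is a direct unpacking of Lemma~\ref{one} applied componentwise, combined with the ampleness characterization of $\Pic(\mathbb{Y})$.
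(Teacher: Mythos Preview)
Your proof is correct and follows essentially the same route as the paper: apply Lemma~\ref{one} componentwise to get at most one nonzero entry per row, then use dominance via the ampleness criterion on $\Pic(\mathbb{Y})$ to force the map $i\mapsto\sigma(i)$ to be a bijection. Your version is in fact slightly more explicit, since you also extract $d_i>0$ from the same ampleness step, whereas the paper leaves this implicit.
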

    \begin{proof}
        Let $\psi = (\psi_1 , \cdots \psi_l)$. Lemma~\ref{one} says that $\psi_j^*$ is a row vector whose elements are zero except one.
        If $\psi_u^*$ and $\phi_v^*$ has nonzero element on the same column, then there is a zero column on $\psi^*$ and hence $\psi^*E$ is not ample for any ample divisor $E$. It contradicts to $\psi$ is dominant.
    \end{proof}

   \begin{cor}\label{diagonal}
       Let $\psi : \mathbb{Y} \rightarrow \mathbb{Y}$ be a dominant
endomorphism. Then, there is a natural number $N$ such that
$\left(\phi^N\right)^*$ is a diagonal matrix.
   \end{cor}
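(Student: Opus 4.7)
The plan is to use Corollary~\ref{matrix} directly and exploit the fact that the symmetric group $S_l$ is finite. By Corollary~\ref{matrix}, the pullback $\psi^*$ is a monomial matrix, i.e. it can be written as
\[
\psi^* E_i = d_i E_{\sigma(i)}
\]
for some permutation $\sigma\in S_l$ and some positive integers $d_1,\dots,d_l$ (positivity of the $d_i$ following from dominance, as in the proof of the corollary).

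Next I would use the functoriality of pullback: $(\psi^N)^* = (\psi^*)^N$. A straightforward induction then gives
\[
(\psi^N)^* E_i \;=\; \left(\prod_{j=0}^{N-1} d_{\sigma^j(i)}\right) E_{\sigma^N(i)}.
\]
So $(\psi^N)^*$ is the monomial matrix corresponding to the permutation $\sigma^N$ together with the coefficients $\prod_{j=0}^{N-1} d_{\sigma^j(i)}$.

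Now I would choose $N$ to be the order of $\sigma$ in the finite group $S_l$ (taking $N=l!$ works uniformly). Then $\sigma^N=\mathrm{id}$, and the identity above shows $(\psi^N)^* E_i$ is a scalar multiple of $E_i$ for every $i$, so $(\psi^N)^*$ is diagonal. There is no real obstacle here; the only ingredient beyond Corollary~\ref{matrix} is the finiteness of $S_l$, which forces every permutation to have finite order.
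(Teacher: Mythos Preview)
Your argument is correct and is exactly the intended one: the paper states this corollary without proof, leaving it as an immediate consequence of Corollary~\ref{matrix} together with the finiteness of $S_l$, which is precisely what you supply. (Note the typo in the statement: $\phi^N$ should be $\psi^N$, as you tacitly corrected.)
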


\subsection{\bf Endomorphisms on $\mathbb{X}$}

   \begin{lem}\label{triangular}
       Let $\phi$ is an endomorphism on $\mathbb{X}$. Then, $\phi^*$
is an upper block-triangular matrix.
       \[
       \phi^* =
           \left(
           \begin{array}{ccc}
           A_1 &\cdots&B\\
       O&\ddots &C\\
       O&\cdots&A_s
       \end{array}
       \right)
       \]
   \end{lem}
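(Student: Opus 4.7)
The plan is to apply the Corollary following Lemma~\ref{zero} componentwise. Write $\phi = (\phi_1,\ldots,\phi_k)$ where $\phi_j := \pi_j \circ \phi : \mathbb{X} \to \pp^{n_j}$. The functoriality of pullback gives $\phi^* E_j = \phi^*\pi_j^* H_j = \phi_j^* H_j$, so the $j$-th column of the matrix representing $\phi^*$ in the basis $\{E_1,\ldots,E_k\}$ is precisely the expansion $\phi_j^* H_j = \sum_i d_{ij} E_i$.

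Applying the Corollary with $f = \phi_j$ and $m = n_j$, we obtain $d_{ij} = 0$ whenever $n_i > n_j$. Under the sorting convention $n_1 \leq n_2 \leq \cdots \leq n_k$, group the indices $\{1,\ldots,k\}$ into maximal consecutive blocks $I_1, I_2,\ldots, I_s$ on which $n_i$ is constant, with the common values strictly increasing from $I_1$ to $I_s$. Then $n_i > n_j$ is equivalent to $i \in I_a$ and $j \in I_b$ with $a > b$, so $d_{ij} = 0$ whenever the row-block lies strictly below the column-block. This is exactly the upper block-triangular form displayed in the statement, with diagonal blocks $A_1,\ldots,A_s$ indexed by $I_1,\ldots,I_s$.

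The argument has essentially no obstacle beyond translating the Corollary's vanishing condition into the block shape. The only care required is to fix the matrix convention — that columns correspond to pullbacks $\phi^* E_j$ — and then to group indices by equal values of $n_i$ to identify the diagonal blocks; in particular no dominance assumption is needed for this step, and within a block (equal $n_i$) the Corollary imposes no constraint, so the diagonal blocks $A_\ell$ are genuinely unconstrained square matrices of size $|I_\ell|$.
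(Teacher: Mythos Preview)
Your argument is correct and is essentially the paper's own: the paper obtains the vanishing by composing $\phi_u$ with the closed embedding $\iota_v:\pp^{n_v}\hookrightarrow\mathbb{X}$ and applying Lemma~\ref{zero} to the resulting map $\pp^{n_v}\to\pp^{n_u}$, which is exactly the proof of the Corollary you invoke. The only difference is packaging---you cite the Corollary directly rather than rerunning its argument---and your explicit choice of the column convention for $\phi^*$, which cleanly matches the displayed upper block-triangular shape.
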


   \begin{proof}
    Let $n_u <n_v$. Suppose that $\phi = (\phi_1, \cdots, \phi_k)$
and $\phi_u^* = (d_{u1} , \cdots, d_{uk})$. Then,
    Consider a morphism
    \[
    \zeta = \phi_u \circ \iota_v :\pp^{n_v} \rightarrow \pp^{n_u}
    \]
     where $\iota_v : \pp^{n_v} \hookrightarrow \mathbb{X}$ is the
$v$-th closed embedding. Clearly, it is a morphism of degree $d_{uv}$.
But it should be a constant map because of Lemma~\ref{zero} so that
$d_{uv}=0$.
   \end{proof}

  For convenience, we will also use another expression $\mathbb{X} = \mathbb{Y}_1 \times
\cdots \times \mathbb{Y}_s$ where $\mathbb{Y}_j = \mathbb{P}^{m_j}
\times \cdots \times \mathbb{P}^{m_j}$ and $m_j < m_{j+1}$.

   \begin{thm}\label{morphism}
       Let $\phi$ is a dominant endomorphism on $\mathbb{X}$. Then,
$\phi^*$ is a block-diagonal matrix
       and hence
               \[\phi(P) =(\psi_1(P_1) , \cdots ,\psi_s(P_s)) \]
       where $\psi_j$ is a morphism on $\mathbb{Y}_j$. Furthermore,
each diagonal block is nonsingular and multiplication of a permutation
and a diagonal matrices.
   \end{thm}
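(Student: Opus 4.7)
The plan is to bootstrap from the block upper-triangular structure already provided by Lemma~\ref{triangular} to genuine block-diagonality, and to identify each diagonal block as a permutation times a diagonal matrix by passing to a restriction of $\phi$ and invoking Corollary~\ref{matrix}.

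Write $\mathfrak{X}=\bigoplus_{j=1}^{s}\mathfrak{Y}_j$ according to the dimension grouping $\mathbb{X}=\mathbb{Y}_1\times\cdots\times\mathbb{Y}_s$. Lemma~\ref{triangular} already says $\phi^{*}$ sends $\mathfrak{Y}_j$ into $\bigoplus_{i\le j}\mathfrak{Y}_i$, so the diagonal blocks $A_j$ acting on $\mathfrak{Y}_j$ are well-defined. To put an actual morphism behind $A_j$, I fix an arbitrary point on each $\mathbb{Y}_i$ for $i\ne j$ and let $\iota:\mathbb{Y}_j\hookrightarrow\mathbb{X}$ be the resulting closed embedding; then set $\psi_j=\pi_{\mathbb{Y}_j}\circ\phi\circ\iota:\mathbb{Y}_j\to\mathbb{Y}_j$. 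A direct unwinding, using that $\iota^{*}E_v$ is the corresponding hyperplane class on $\mathbb{Y}_j$ if $v$ lies in block $j$ and $\iota^{*}E_v=0$ otherwise, gives $\psi_j^{*}=A_j$. Since $\phi$ is finite (Proposition~2.1), the projection formula on curves forces $\phi^{*}$ to be injective on $\Pic(\mathbb{X})\otimes\mathbb{R}$, and a block-triangular injective matrix has every diagonal block nonsingular; hence each $A_j$ is nonsingular. The entries of $A_j$ are non-negative (pullbacks of effective divisors are effective), so nonsingularity means every row of $A_j$ has a strictly positive entry; this is precisely the condition for $A_j$ to carry the ample cone of $\mathbb{Y}_j=(\mathbb{P}^{m_j})^{a_j}$ (the positive orthant) into itself. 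Proposition~2.2 then makes $\psi_j$ a dominant endomorphism of $\mathbb{Y}_j$, and Corollary~\ref{matrix} yields that $A_j$ is a permutation times a diagonal matrix: for each index $u$ in block $j$ there is a unique $\sigma(u)$ in the same block with $d_{u\sigma(u)}>0$ and $d_{uv}=0$ for every other $v$ in block $j$.

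The remaining task is to upgrade block-triangular to block-diagonal, i.e.\ to show $d_{uv}=0$ whenever $u$ lies in block $j$ and $v$ lies in a strictly lower-dimensional block. For this I would apply Theorem~\ref{forget} to the single component $\phi_u:\mathbb{X}\to\mathbb{P}^{m_j}$: iteratively forgetting every factor on which $\phi_u$ is constant produces a factorization $\phi_u=g\circ\pi$ through the subproduct indexed by $J=\{v:d_{uv}>0\}$, and the ``moreover'' clause of Theorem~\ref{forget} gives $\sum_{v\in J}(n_v+1)\le m_j+1$. By the previous step $\sigma(u)\in J$ with $n_{\sigma(u)}=m_j$, so the single term indexed by $\sigma(u)$ already contributes $m_j+1$; the inequality collapses to equality, forcing $J=\{\sigma(u)\}$. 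Hence $d_{uv}=0$ for every $v\ne\sigma(u)$, which is exactly the block-diagonal conclusion. Assembling the pieces, $\phi^{*}$ is block-diagonal with each diagonal block a nonsingular permutation-times-diagonal matrix, and consequently $\phi(P)=(\psi_1(P_1),\ldots,\psi_s(P_s))$ with $\psi_j$ a morphism on $\mathbb{Y}_j$.

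The step I expect to require the most care is the translation from the purely linear-algebraic fact ``$A_j$ is nonsingular with non-negative entries'' into the geometric statement that $\psi_j$ is dominant on $\mathbb{Y}_j$; it is this bridge that licenses the invocation of Corollary~\ref{matrix} and thereby produces the index $\sigma(u)$ that makes the Theorem~\ref{forget} step collapse. Everything else is careful bookkeeping of indices across the dimension blocks.
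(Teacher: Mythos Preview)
Your argument is correct and uses the same two ingredients as the paper: the Chow-ring vanishing packaged in Lemma~\ref{zero}/Theorem~\ref{forget} applied to each component $\phi_u$, and the dominance hypothesis to force the diagonal blocks $A_j$ to be nonsingular. The paper runs the same logic by contradiction (assume an off-diagonal entry $d_{uv}\neq 0$ with $n_v<n_u$, deduce from $(\phi_u^*H)^{n_u+1}=0$ that the $u$-th row of $A_j$ vanishes, and then assert this contradicts dominance), whereas you argue directly by first establishing that $\phi^*$ is injective---hence each $A_j$ nonsingular---and only then invoking Corollary~\ref{matrix} and Theorem~\ref{forget}; your route has the virtue of making explicit \emph{why} a zero row in $A_j$ is incompatible with dominance, a step the paper states without justification.
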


   \begin{proof}
     Since $\phi^*$ is an upper block-triangular matrix by
Lemma~\ref{triangular}, it's enough to show all upper non-diagonal
block is zero.
     Let $n_u <n_v$. Suppose that $\phi_u^* =(d_{u1} , \cdots,
d_{uk})$ where $d_{uv} \neq 0$.
     Then, for any $w$ satisfying $n_w \geq n_v$,  $d_{uw} =0$
because $(\sum d_{ui}E_i)^{n_u+1}=0$ guarantees
     \[
     C d_{uv}^{n_u-1} d_{uv}^{n_v-n_u+1} E_u^{n_u} \cdot
E_v^{n_v-n_u+1} = 0
    \] while $E_u^{n_u} \cdot E_v^{n_v-n_u+1} \neq 0.$

     So, a diagonal block $A_j$ have a zero row and hence $\phi^*D$
can't be ample for all ample divisor $D$. Thus $\phi$ is not dominant and
it's a contradiction. Therefore $d_{uv} = 0.$
     Which means $\phi^*$ is a block-diagonal matrix:
     \[ \phi^* =
   \left(
   \begin{array}{ccc}
   \psi_1^* &O&O\\
   O&\ddots &O\\
   O&O&\psi_s^*
   \end{array}
   \right)
   \]
   which means
   \[
   \phi(P) = (\psi_1, \cdots , \psi_ s)
    \]
    where $\psi_j$ is a dominant endomorphism on $\mathbb{Y}_j$.
   \end{proof}

   \begin{cor}
       Let $\phi :\mathbb{X} \rightarrow \mathbb{X}$ be a dominant endomorphism. Then,
       for sufficiently large $N$, $\phi^N$ is a product of dominant endomorphism on $\pp^{n_i}$;
       \[
       \phi^N = \prod \phi_{N,i} : \pp^{n_i} \rightarrow \pp^{n_i}
       \]
       Moreover,
       \[
       \mu_1(\phi^N,D) = \mu(phi^N) = \min \deg \phi_{N,i}, \quad \mu_2(\phi^N,D 0 \max \deg \phi_{N,i}.
       \]
   \end{cor}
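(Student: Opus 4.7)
The plan is to combine Theorem~\ref{morphism} and Corollary~\ref{diagonal} for the structural claim, and then reuse the pullback computation from the product-of-morphisms example of Section~3 for the height coefficients. First, Theorem~\ref{morphism} lets me write $\phi = (\psi_1, \ldots, \psi_s)$ with each $\psi_j$ a dominant endomorphism of $\mathbb{Y}_j = (\pp^{m_j})^{l_j}$. Corollary~\ref{diagonal} then supplies an integer $N_j$ such that $(\psi_j^{N_j})^*$ is a diagonal matrix on $\mathfrak{Y}_j$. Writing $\psi_j^{N_j} = (\eta_{j,1}, \ldots, \eta_{j,l_j})$ with $\eta_{j,r} : \mathbb{Y}_j \to \pp^{m_j}$, diagonality translates to $\eta_{j,r}^* H = d_{j,r} E_r$: the pullback is concentrated on the $r$-th generator and trivial on every other factor. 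Theorem~\ref{forget}, applied to each $\eta_{j,r}$, then lets me discard all factors on which the pullback vanishes, so $\eta_{j,r}$ descends through the $r$-th projection to a genuine self-morphism of $\pp^{m_j}$ of degree $d_{j,r}$. Hence $\psi_j^{N_j}$ is already a Cartesian product of projective-space endomorphisms.

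Setting $N$ to be a common multiple of $N_1,\ldots,N_s$ makes every $\psi_j^N$ a Cartesian product simultaneously, so after re-indexing back to $\mathbb{X} = \pnk$ I obtain $\phi^N = \prod_{i=1}^k \phi_{N,i}$ with $\phi_{N,i} : \pp^{n_i} \to \pp^{n_i}$. For the coefficients, this decomposition gives $(\phi^N)^* E_i = (\deg \phi_{N,i})\, E_i$, and any ample divisor on $\mathbb{X}$ has the form $D = \sum a_i E_i$ with all $a_i > 0$ by the ample-cone lemma of Section~A.1. Then
\[
(\phi^N)^* D - \alpha D = \sum_i a_i \bigl( \deg \phi_{N,i} - \alpha \bigr) E_i
\]
is ample iff $\alpha < \deg \phi_{N,i}$ for every $i$, giving $\mu_1(\phi^N, D) = \min_i \deg \phi_{N,i}$. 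Because this value does not depend on the chosen ample $D$, it also equals $\mu(\phi^N)$; the symmetric computation yields $\mu_2(\phi^N, D) = \max_i \deg \phi_{N,i}$.

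The only step that is not purely mechanical is the passage from ``$(\psi_j^{N_j})^*$ is diagonal'' to ``$\psi_j^{N_j}$ is a Cartesian product of projective-space endomorphisms.'' This is essentially the content of Lemma~\ref{zero} and Theorem~\ref{forget}: when a component morphism has pullback supported on a single factor, the remaining coordinates do not appear in its defining data and it truly factors through one projection. Everything else is bookkeeping --- the matrix statements of Corollary~\ref{matrix} and Corollary~\ref{diagonal}, the choice of a common iterate $N$, and the straightforward Picard-group computation on a product of projective spaces.
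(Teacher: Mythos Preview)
Your argument is correct and is exactly the route the paper intends: the corollary is stated without proof as an immediate consequence of Theorem~\ref{morphism}, Corollary~\ref{diagonal}, and the product-of-projective-spaces example in Section~3, and you have supplied precisely those ingredients in the right order. The one non-formal step you flag---passing from ``$(\psi_j^{N_j})^*$ diagonal'' to ``$\psi_j^{N_j}$ is a genuine Cartesian product''---is indeed handled by the mechanism of Theorem~\ref{forget}, so nothing is missing.
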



\begin{thebibliography}{ABCDEFG}

        \bibitem{Cu}
             Cutkosky, Steven Dale. {\em Resolution of singularities},
             Graduate Studies in Mathematics, Vol 63, American Mathematics Society, 2004.

        \bibitem{Dem}
         Demailly, J.-P. , {\em Multiplier ideal sheaves and analytic methods in algebraic geometry, in: School on Vanishing Theorems
        and Effective Results in Algebraic Geometry}, Trieste 2000, ICTP Lecture Notes Vol. 6 (Abdus Salam Int. Cent. Theoret.
        Phys., Trieste, 2001), pp. 1-148.

        \bibitem{G}
        Grothendieck, A., {\em \'{E}l\'{e}ments de G\'{e}om\'{e}trie Alg\'{e}brique IV, Etude locale des schemas et des morphismes de schemas. III},
        Inst. Hautes Etudes Sci. Publ. Math. No. 28, 1966.

        \bibitem{H}
            Hartshorne, S., {\em Algebraic geometry}, Springer, 1977.

        \bibitem{F}
             Fulton, W., {\em Intersection theory },  Second edition, Springer-Verlag, Berlin, 1998.

        \bibitem{La}
            Lang, S., {\em Fundamentals of diophantine geometry}, Berlin Heidelberg New York: Springer 1983.

        \bibitem{Le2}
            Lee, C., {\em The maximal ratio of coefficients of divisors and an upper bound for height for rational maps}, submitted, arXiv:1002.3357, 2010

        \bibitem{Le4}
            Lee, C., {\em The numerical equivalence relation for height functions and ampleness and nefness criteria for divisors}, preprint, 2009.

        \bibitem{Laz}
        Lazarsfeld, R. , {\em Positivity in algebraic geometry}, Ergebnisse der Mathematik und ihrer Grenzgebiete 3. Bd. 48 (Springer, New York, 2004)

       \bibitem{MS}
            Morton, P, Silverman, J.H, {\em Rational periodic points of rational functions},
            Internat. Math. Res. Notices, (2), 97-110, 1994

       \bibitem{P}
            Peternell, T., {\em Finite morphisms of projective and K\"{a}hler manifolds}, Sci. China Ser. A 51, no. 4, 685-694, 2008

        \bibitem{Sh}
            Shafarevich, I. {\em Basic algebraic geometry}, Springer, 1994.

        \bibitem{S5}
            Silverman, J. H., {\em Rational points on K3 surfaces: a new canonical height.}, Invent. Math., 105(2):347--373, 1991.

        \bibitem{SH}
            Silverman, J. H., Hindry, M. {\em Diophantine geometry, An introduction}, Springer, 2000.

        \bibitem{S2}
            Silverman, J. H. {\em The arithmetic of dynamical system}, Springer, 2007.

        \bibitem{S3}
            Silverman, J. H. {\em Height estimate for equidimensional dominant rational maps}, preprint, ArXiv:0908.3835, 2009.

    \end{thebibliography}
\end{document}